\newtheorem{theorem}{Theorem}[section]  % 按 section 编号
\newtheorem{definition}[theorem]{Definition}
\newtheorem{proposition}[theorem]{Proposition}
\newtheorem{lemma}[theorem]{Lemma}
\begin{document}
\title{A density version of Waring-Goldbach problem}
\author{Meng Gao\\ \small{Department of Mathematics, China University of Mining and Technology,}\\ \small{Beijing 100083, P. R. China}\\ \small{E-mail: ntmgao@outlook.com}  }

\date{}
\maketitle

\begin{abstract}
Let $ \mathbb{P} $ denote the set of all primes and $ k \geq 2 $ be a positive integer. Suppose that $ A $ is a subset of $ \mathbb{P} $ with $ \underline{\delta}_{A}(\mathbb{P})>1-1/2k $, where $ \underline{\delta}_{A}(\mathbb{P}) $ is the lower density of $ A $ relative to $ \mathbb{P}$. We prove that every sufficiently large natural number $ n $ satisfying the necessary congruence condition can be written in the form $ n=p_{1}^{k}+\cdots+p_{s}^{k} $, where $ s > \max (16k\omega(k)+4k+3,\ k^{2}+k) $ and $ p_{i}\in A $ for all $ i \in \{ 1,\ldots,s\} $.	
	
\begin{flushleft}
	\textbf{Keywords:} Waring-Goldbach, transference principle, density version
\end{flushleft}

\end{abstract}

\section{Introduction}
For each prime $ p $, define $ \tau (k,p) $  so that $p^{\tau (k,p)}\ |\ k,\ p^{\tau (k,p)+1}\ \nmid\ k  $. Let

		\begin{center}
		$\begin{aligned} R_{k}:=\prod\limits_{(p-1)|k}p^{\gamma(k,p)}\end{aligned} $,
	\end{center}
where\begin{center}
  	$ \gamma(k,p):=  
  	\begin{cases}
		 \tau(k,p)+2 \ \quad if \   p=2 \  and \   \tau(k,p)>0   ,    \\
		 \tau(k,p)+1 \ \quad otherwise.
		\end {cases}$ 
	\end{center}
	
Waring-Goldbach problem is an important problem of additive number theory. The first result about Waring-Goldbach problem was established in \cite{Hua1}, in which Hua showed that all sufficiently large number $ n \equiv s(\bmod \ R_{k})$ can be represented as
\begin{center}
	$ n=p_{1}^{k}+\cdots+p_{s}^{k}, $
\end{center}
where $ p_{1},\ldots,p_{s} $ are primes and $ s>2^{k}. $ Since then the value of $ s $ has been constantly reduced. For the current records of the value of $ s $, one could refer to \cite{KW}. 

The transference principle was originally developed by Green \cite{Gre}, and now has become a powerful tool in additive number theory. Li, Pan \cite{LP} and Shao \cite{Shao} used the transference principle to study the density version of three primes theorem and obtained remarkable results. Similarly, Salmensuu \cite{Sal} used the transference principle to investigate the density version of Waring's problem. Motivated by Salmensuu's work \cite{Sal}, we study a density version of Waring-Goldbach problem.

 Let   $ A\subseteq \mathbb{P} $. Define
\begin{center}
	 $ \delta_{A}=\underline{\delta}_{A}(\mathbb{P}):=\liminf \limits_{N\rightarrow\infty}\dfrac{|A\bigcap[N]|}{|\mathbb{P}\bigcap[N]|} , $
\end{center}
where $ [N]:=\{1,\ldots,N\} $. And let $ \omega(n) $ denote the number of distinct prime divisors of $ n $.

Our main result is the following.

\begin{theorem}\label{thmcont:1.1}
	Let $ s,k \in \mathbb{N},\ k \geq 2,\ s > \max(16k\omega(k)+4k+3,\ k^{2}+k) $ and let $ \delta_{A}>1-1/2k $, then for every sufficiently large positive integer $  n \equiv s \ (\bmod \ R_{k}) $, we have $ n=p_{1}^{k}+\cdots+p_{s}^{k}, $  where $ p_{i} \in A $ for all $ i \in \{1,\ldots,s\} $.
\end{theorem}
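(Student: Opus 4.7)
The plan is to execute the transference-principle argument in the style of Green, Shao, and (most directly) Salmensuu, adapted from $k$-th powers of integers to $k$-th powers of primes. Define
\begin{equation*}
S_A(\alpha) := \sum_{\substack{p \le N^{1/k}\\ p \in A}} (\log p)\, e(\alpha p^k),
\qquad R_A(n) := \int_0^1 S_A(\alpha)^s\, e(-\alpha n)\, d\alpha,
\end{equation*}
so that $R_A(n)>0$ immediately yields the desired representation. Hua's theorem supplies a positive asymptotic for the full-prime analogue $R_{\mathbb P}(n)$ under $n\equiv s\pmod{R_k}$, so the task reduces to transferring this positivity from $\mathbb P$ to $A$. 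The transfer is effected by fixing a Selberg-type pseudorandom majorant $\nu$ for $(\log p)\mathbf 1_{\mathbb P}(p)$ and, via a Green--Tao--Ziegler / Gowers style dense-model theorem, decomposing the prime-weighted indicator $f_A(p):=(\log p)\mathbf 1_A(p)$ as
\begin{equation*}
f_A = g + h,
\end{equation*}
where $0\le g\le C=C(k)$ pointwise, $\mathbb E\, g \ge \delta_A-o(1)$, and $\|\widehat h\,\|_\infty\le\varepsilon$. The hypothesis $\delta_A > 1-1/(2k)$ forces the model $g$ to be close in density to the full von Mangoldt weight.

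I would then expand $S_A^s=(G+H)^s$ with $G=\widehat g$, $H=\widehat h$, and extract the main term from $\int G^s e(-\alpha n)\,d\alpha$ by a Hardy--Littlewood analysis. On the major arcs $g$ is a perturbation of the normalised von Mangoldt weight of mean at most $1-\delta_A<1/(2k)$, so the $G^s$ integral is bounded below by a positive constant multiple of the Hua main term. The threshold $s>k^2+k$ secures positivity of the Hua singular series and singular integral under $n\equiv s\pmod{R_k}$, while $s>16k\omega(k)+4k+3$ controls the local factors at primes dividing $R_k$ so that they are not overwhelmed by the density loss; the two conditions together yield a main term of order $N^{s/k-1}$ up to an absolute constant depending on $\delta_A$ and $C$.

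The cross terms $\int G^{s-j}H^j\, e(-\alpha n)\,d\alpha$ with $j\ge 1$ are controlled by a restriction-type inequality: H\"older combined with $\|H\|_\infty\le\varepsilon$ and an $L^s$ bound of the form
\begin{equation*}
\int_0^1 |\widehat\nu(\alpha)|^s\,d\alpha\ll N^{(s-k)/k}
\end{equation*}
produces a total error of size $\varepsilon\cdot N^{s/k-1}$, which is dominated by the main term once $\varepsilon$ is chosen small enough, giving $R_A(n)>0$. The principal obstacle will be proving this restriction estimate for prime $k$-th-power Weyl sums sharply enough to activate at $s=k^2+k$, relying on Kumchev--Wooley-type mean-value bounds for exponential sums over primes, and then matching it with an explicit dense-model theorem whose parameters are tight enough that the density threshold comes out at exactly $1-1/(2k)$; the additional summand $16k\omega(k)+4k+3$ will then emerge from the detailed arithmetic bookkeeping at the primes dividing $R_k$ during the major-arc analysis.
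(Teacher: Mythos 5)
Your plan is in the right genus (transference, dense model, restriction estimate, circle method on the model), but two essential ingredients of the actual argument are missing, and without them the route you sketch does not close.

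First, there is no W-trick in your outline, and this is fatal for the dense-model step. You propose to decompose $f_A = g+h$ with $\|\widehat h\|_\infty\le\varepsilon$ relative to a pseudorandom majorant $\nu$ for $(\log p)\mathbf 1_{\mathbb P}$. But the natural majorant for primes (or prime $k$-th powers) is \emph{not} pseudorandom in the required sense: its Fourier transform has spikes of order $N/q$ at rationals $a/q$ with $q$ small, so $\|\widehat\nu-\widehat{1_{[N]}}\|_\infty$ is not $o(N)$ and no dense-model theorem with Fourier-$L^\infty$ error applies. The paper fixes this by passing to residue classes modulo $W=\prod_{p\le w}p^{2k}$: one works with $\nu_b(n)$ supported on $n$ with $Wn+b=p^k$, and only for such $\nu_b$ is $|\widehat{\nu_b}(\alpha)-\widehat{1_{[N]}}(\alpha)|=o(N)$ provable (Proposition~\ref{pro 5.1}). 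The transference then happens inside each class $b\in Z(W)$ and the classes are glued together by a local argument. Your version skips this entirely.

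Second, you misidentify where the density threshold $1-1/(2k)$ comes from and therefore cannot actually activate it. You write that the model satisfies $\mathbb E\, g \ge \delta_A - o(1)$ and that $\delta_A>1-1/(2k)$ "forces $g$ close to the von Mangoldt weight." That is not the mechanism. After passing to the sequence $n$ with $Wn+b=p^k$ and weighting by $kp^{k-1}\log p$, a partial summation (Lemma~\ref{lem 4.7}) shows that the relevant mean drops from $\delta_A$ to $k\delta_A-(k-1)$ (because the weight concentrates mass near the top of the range, where only a $1/k$-fraction of the primes live). The transference lemma needs this mean to exceed $1/2$, and $k\delta_A-(k-1)>1/2$ is exactly $\delta_A>1-1/(2k)$. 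Your sketch has nothing producing the factor $k$ in front of $\delta_A$, so you cannot explain, let alone prove, the stated threshold. Similarly, $s>16k\omega(k)+4k+3$ does not arise from "bookkeeping at primes dividing $R_k$ during the major-arc analysis"; it comes from the separate local problem that $(W,s)$ be a Waring pair (Proposition~\ref{pro 4.2} together with Lemma~\ref{lem 4.5}), which is a combinatorial statement about sumsets in $Z(W)$ needed to distribute the target residue $n\pmod W$ among the classes $b_1,\dots,b_s$ before transference is invoked on each class. And $s>k^2+k$ is the restriction-estimate threshold coming from Hua's inequality / Vinogradov (Lemma~\ref{lem 6.1} with $t=k(k+1)/2$), not from positivity of the singular series as you assert.

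In short: add the W-trick, replace the bare $\mathbb E g\ge\delta_A$ claim by the partial-summation computation giving $k\delta_A-(k-1)$, and insert the local Waring-pair lemma to combine the residue classes; only then does the transference machinery (here Salmensuu's Proposition~\ref{pro 7.3}) actually produce a representation.
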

Notice that as $ k $ approaches infinity, $ 1 - 1/2k $ approaches $ 1 $. This result is rather poor, so we aim to obtain a better result in terms of density. By relaxing the density condition, we have achieved a more favorable density threshold.

Let $ \mathcal{P}\subseteq  \mathbb{P} $ be such that \begin{center}
	$ \pi_{\mathcal{P}}(x) \sim \delta\pi(x) $,
\end{center}
where $ \delta \in (0,1) $ is a constant and $ \pi_{\mathcal{P}}(x):=\#\{p \leq x: p\in \mathcal{P}\} $. Then we have the following result.
\begin{theorem}\label{thmcont:1.2}
	Let $ s,k \in \mathbb{N},\ k \geq 2,\ s > \max(16k\omega(k)+4k+3,\ k^{2}+k) $ and let $ \delta>1/2 $, then for every sufficiently large positive integer $  n \equiv s \ (\bmod \ R_{k}) $, we have $ n=p_{1}^{k}+\cdots+p_{s}^{k}, $  where $ p_{i} \in \mathcal{P} $ for all $ i \in \{1,\ldots,s\} $.
\end{theorem}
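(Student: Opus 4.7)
The plan is to run the same transference-principle scheme used for Theorem~\ref{thmcont:1.1} (in the spirit of Salmensuu~\cite{Sal}), changing only the final combinatorial input so as to exploit the natural-density hypothesis. First I would define the weighted exponential sum
\[
F_\mathcal{P}(\alpha)=\sum_{\substack{p\in\mathcal{P}\\ p\le N^{1/k}}}(\log p)\,e(\alpha p^k),
\]
so that $r_\mathcal{P}(n)=\int_0^1 F_\mathcal{P}(\alpha)^s\, e(-n\alpha)\,d\alpha$, apply the $W$-trick with $W=\prod_{p\le w}p$ for a slowly growing $w$, dominate the normalised indicator of $\mathcal{P}$ in each admissible residue class modulo $WR_k$ by a pseudo-random enveloping-sieve majorant $\nu$, and split the unit interval into major arcs $\mathfrak{M}$ and minor arcs $\mathfrak{m}$.

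On the minor arcs I would combine the majorant $\nu$ with standard Vinogradov/Weyl estimates and Hua's lemma for exponential sums over primes; the hypothesis $s>\max(16k\omega(k)+4k+3,\,k^{2}+k)$, identical to that of Theorem~\ref{thmcont:1.1}, makes this contribution of lower order than the expected main term. On the major arcs the asymptotic density $\pi_\mathcal{P}(x)\sim\delta\pi(x)$ feeds, via partial summation and the $W$-trick, into a decomposition
\[
F_\mathcal{P}(\alpha)=\delta\, F_\mathbb{P}(\alpha)+E(\alpha)
\]
with $E$ genuinely of lower order on $\mathfrak{M}$---a cancellation that would fail under only a lower-density hypothesis. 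The transference theorem (generalised von Neumann inequality for $\nu$-bounded functions) then reduces the problem to a ``dense model'' counting question in $\mathbb{Z}_{WR_k}$ for a set $B$ of relative density $\delta$ among reduced residues.

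The endgame is the following model statement: if $B\subseteq(\mathbb{Z}/WR_k\mathbb{Z})^{\times}$ has relative density strictly greater than $1/2$ and $n\equiv s\pmod{R_k}$, then a positive proportion of the solutions of $n\equiv b_1^k+\cdots+b_s^k\pmod{WR_k}$ have every $b_i$ in $B$. For $s$ past the stated threshold this follows from an iterated Cauchy--Davenport/Kneser-type sumset argument together with Hua's local solvability of the congruence: density strictly above $1/2$ is precisely what guarantees a non-empty pairwise intersection $B\cap(n'-B)$ at each sumset step. Pulling this back through transference and combining with the Hua-type asymptotic $r_\mathbb{P}(n)\gg \mathfrak{S}(n)\,n^{s/k-1}/(\log n)^s$ yields $r_\mathcal{P}(n)>0$ for all sufficiently large $n\equiv s\pmod{R_k}$. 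The step I expect to be the main obstacle is the model statement itself: extracting a clean $1/2$ threshold from a $k$-th-power sumset argument inside $(\mathbb{Z}/WR_k\mathbb{Z})^\times$ is where all the improvement over Theorem~\ref{thmcont:1.1} is concentrated, and the density bound $\delta>1/2$ is forced precisely here.
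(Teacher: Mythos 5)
Your high-level framework --- $W$-trick, pseudorandom majorant, restriction estimate, transference to a dense model --- matches the paper. But you have misidentified where the improvement from $\delta_A > 1-1/2k$ (Theorem~\ref{thmcont:1.1}) to $\delta > 1/2$ (Theorem~\ref{thmcont:1.2}) actually lives, and as a result your ``main obstacle'' is a non-issue while the genuine new input is missing from your outline. The paper uses \emph{exactly the same} local sumset statement for both theorems: Proposition~\ref{pro 4.2} says $(W,s)$ is a Waring pair (where $W=\prod_{1<p\le w}p^{2k}$, not the squarefree product you wrote), and Lemma~\ref{lem 4.5} gives the same fixed $1/2$ threshold on $\mathbb{E}_{b\in Z(W)}f(b)$ for both theorems. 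There is no refined $k$-th-power sumset argument in $(\mathbb{Z}/WR_k\mathbb{Z})^\times$ special to the asymptotic-density case, and no new Cauchy--Davenport/Kneser step.

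The entire improvement is instead concentrated in the mean value estimate that feeds into that unchanged local lemma. Compare Lemma~\ref{lem 4.7} with Lemma~\ref{lem 4.8}. With only a lower-density hypothesis, partial summation of $\sum_{p\in A,\,X_1<p\le X_2}kp^{k-1}\log p$ can use $A(X_2)\ge (1-o(1))\delta_A X_2/\log X_2$ for the boundary term but must fall back on the trivial bounds $A(X_1)\le(1+o(1))X_1/\log X_1$ and $A(t)\le(1+o(1))t/\log t$ inside the integral, yielding only $\mathbb{E}_{b\in Z(W)}g(b,N)\ge k\delta_A-(k-1)-\epsilon$; pushing this above the fixed local threshold $1/2$ forces $\delta_A>1-1/2k$. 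With the two-sided hypothesis $\pi_{\mathcal P}(x)\sim\delta\pi(x)$ the same partial summation carries the factor $\delta$ through every term, giving $\mathbb{E}_{b\in Z(W)}\mathbf{g}(b,N)\ge(1-\epsilon)\delta$, and now $\delta>1/2$ suffices. So the ``cancellation'' you gesture at does appear, but in the real-variable mean value estimate for $\mathbf{g}(b,N)$ and not as a major-arc Fourier decomposition $F_{\mathcal P}=\delta F_{\mathbb P}+E$: in the paper all the circle-method asymptotics (Section~5) concern the majorant $\nu_b$, which is identical in both theorems, while $\mathbf{f}_b$ enters only through the restriction estimate (Proposition~\ref{pro 6.4}, proved identically to Proposition~\ref{pro 6.3}) and through Lemma~\ref{lem 4.8}. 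In short: your proposal would need to replace the hypothetical ``model statement with a new $1/2$ threshold'' by the sharper lower bound for $\mathbb{E}_{b\in Z(W)}\mathbf g(b,N)$; the local combinatorics requires no change at all.
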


\section{Notation}
Let $ s \in \mathbb{N} $ and $ s\geq 2 $. For a set $ A \in \mathbb{N} $, define the sumset by\begin{center}
	$ sA=\{a_{1}+\cdots+a_{s}:a_{1},\ldots,a_{s}\in A\} $
\end{center}
and define\begin{center}
	$ A^{(k)}=\{a^{k}:a\in A\} $.
\end{center}
For finitely supported functions $ f,g:\mathbb{Z}\rightarrow \mathbb{C} $ , we define convolution $ f\ast g  $ by 
\begin{center}
	$\begin{aligned}
	 f\ast g(n)=\sum\limits_{a+b=n}f(a)g(b) \end{aligned}$.
\end{center}

The Fourier transform of a finitely supported function $ f:\mathbb{Z}\rightarrow \mathbb{C} $ is defined by
\begin{center}
	$\begin{aligned} \widehat{f}(\alpha)=\sum\limits_{n\in \mathbb{Z}}f(n)e(n \alpha) \end{aligned}$ ,
\end{center}
where $ e(x)=e^{2\pi ix} $.
For $ x\in \mathbb{R} $ and $ q\in \mathbb{N} $,  we will also use notation $ e_{q}(x) $ as an abbreviation for $ e(x/q)$. 

For a set $ B $, we write $ 1_{B}(x) $ for  its characteristic function.  If $ f:B\rightarrow \mathbb{C} $ is a function and $ B_{1} $ is a non-empty finite subset of $ B $, we write $ \mathbb{E}_{x \in B_{1}}f(x) $ for the average value of $ f $ on  $ B_{1} $,  that is to say
\begin{center}
	$ \mathbb{E}_{x \in B_{1}}f(x)=\dfrac{1}{|B_{1}|}\sum\limits_{x \in B_{1}}f(x) $.
\end{center}
We write $ f=o(g) $ if 
\begin{center}
	$ \lim\limits_{x\rightarrow\infty} \dfrac{f(x)}{g(x)}=0$ .
\end{center}
The function $ f $ is asymptotic to $ g $, denoted $ f\sim g $ , if
\begin{center}
	$ \lim\limits_{x\rightarrow\infty} \dfrac{f(x)}{g(x)}=1$ .
\end{center}
We will use notation $ \mathbb{T} $ for $ \mathbb{R}/\mathbb{Z} $.
We also define the $ L^{p}$-$norm $ 
\begin{center}
	$\begin{aligned}
	 \|f\|_{p}=\bigg(\int_{\mathbb{T}}|f(\alpha)|^{p}d\alpha\bigg)^{1/p}\end{aligned} $
\end{center}
for a function $ f:\mathbb{T}\rightarrow \mathbb{C} $.

We write $ f \ll g $ or $ f=O(g) $ if there exists a constant $ C>0  $ such that $ |f(x)|\leq Cg(x) $ for all values of $ x $ in the domain of $ f $. If $ f\ll g $ and $ g \ll f $ we write $ f\asymp g $. The letter $ p $, with or without subscript,
denotes a prime number.

 For $ x \in \mathbb{R} $,
	\begin{center}
		$ ||x||=\min \{|x-z|:z\in \mathbb{Z}\} $.
	\end{center}

\section{Pseudorandom function and W-trick}
Let $ n_{0} $ be a sufficiently large positive integer satisfying $ n_{0}\equiv s \ (\bmod \ R_{k}) $ .
Let $ w=\log\log\log n_{0} $ and
\begin{equation}\label{equ1}
W:=\prod\limits_{1<p\leq w}p^{2k} .
\end{equation}
For $ m \in \mathbb{N} $, define $ \mathbb{Z}_{m}^{(k)}:=\{t^{k}:t \in \mathbb{Z}_{m}\} $ and $ Z(m):=\{a \in \mathbb{Z}_{m}^{(k)}:(a,m)=1 \} $, where $ \mathbb{Z}_{m}:=\mathbb{Z}/m\mathbb{Z} $.
Let $ b \in [W] $ be such that 
$ b \in Z(W) $. Define
\begin{equation}\label{equ2}
\sigma(b):=\#\{z \in [W]:z^{k}\equiv b(\bmod \ W)\}.
\end{equation}
Let $ N:=\lfloor 2n_{0}/(sW) \rfloor $. It is not difficult to prove that
\begin{center}
	$ W=o(\log N) $.
\end{center}

Let $ A, \mathcal{P} \subseteq \mathbb{P}  $ be defined as in Section 1. Define functions $ f_{b},\ \mathbf{f}_{b},\ \nu_{b}:[N]\rightarrow \mathbb{R}_{\geq 0} $ by
\begin{center}
	$  f_{b}(n):=\begin{cases}
	\dfrac{\varphi(W)}{W\sigma(b)}kp^{k-1}\log p \quad if \  Wn+b=p^{k}, \ p\in A,  \\
	0  \qquad\qquad\qquad\qquad \  otherwise, 
	\end {cases}  $
\end{center}

\begin{center}
	$  \mathbf{f}_{b}(n):=\begin{cases}
	\dfrac{\varphi(W)}{W\sigma(b)}kp^{k-1}\log p \quad if \  Wn+b=p^{k}, \ p\in \mathcal{P},  \\
	0  \qquad\qquad\qquad\qquad \  otherwise, 
	\end {cases}  $
\end{center}
and
\begin{center}
	$  \nu_{b}(n):=\begin{cases}
	\dfrac{\varphi(W)}{W\sigma(b)}kp^{k-1}\log p \quad if \  Wn+b=p^{k}, \ p\in \mathbb{P},  \\
	0  \qquad\qquad\qquad\qquad \ otherwise. 
	\end {cases}  $	
\end{center}

Define functions $ g,\ \mathbf{g}:[W]\times \mathbb{N}\rightarrow \mathbb{R}_{\geq 0} $ by
\begin{center}
$ g(b,N):=\mathbb{E}_{n \in [N]}f_{b}(n). $
\end{center}
and
\begin{center}
$ \mathbf{g}(b,N):=\mathbb{E}_{n \in [N]}\mathbf{f}_{b}(n). $
\end{center}
Obviously, $ f_{b}(n)\leq \nu_{b}(n) $ and $ \mathbf{f}_{b}(n)\leq \nu_{b}(n) $ for all $ n \in [N] $.
On the other hand, when $ b\in [W] $ with $ b\in Z(W) $, we have 
\begin{center}
	$ \begin{aligned}
	\mathbb{E}_{n \in [N]}\nu_{b}(n)
	&=\dfrac{1}{N}\sum\limits_{\substack{W+b\leq p^{k}\leq WN+b \\ p^{k}\equiv b (\bmod W) }}\dfrac{\varphi(W)}{W\sigma(b)}kp^{k-1}\log p\\
	&=\dfrac{1}{N}\sum\limits_{\substack{ p\leq (WN+b)^{1/k} \\ p^{k}\equiv b (\bmod W) }}\dfrac{\varphi(W)}{W\sigma(b)}kp^{k-1}\log p+O(W/N)\\
	&=\dfrac{1}{N\sigma(b)}\sum\limits_{\substack{z \in [W]\\ z^{k}\equiv b(\bmod W)}}\dfrac{\varphi(W)}{W}\sum\limits_{\substack{p\leq (WN+b)^{1/k} \\ p \equiv z (\bmod W)}}kp^{k-1}\log p+o(1).
	\end{aligned}$
\end{center}

Then, using the same method of Chow \cite[Section 2]{Chow}, we can prove that $ \mathbb{E}_{n \in [N]}\nu_{b}(n)\sim 1 $.
	We say that a function $ \nu :[N] \rightarrow \mathbb{R}_{\geq 0} $ is pseudorandom if $ \mid \widehat{\nu}(\alpha) - \widehat{1_{[N]}}(\alpha) \mid =o(N) $ for all $ \alpha \in \mathbb{T} $. In Section 5 we will prove that $ \nu_{b} $ is pseudorandom for all $ b \in [W] $ with $ b \in Z(W) $. The purpose of the W-trick in the definitions of $ f_{b} $ and $ \nu_{b} $ is to make pseudorandomness of $ \nu_{b} $ possible.

The notation of this section will be in force for the rest of the paper.

\section{Mean value estimate}

In this section, we will use Salmensuu's method to prove two mean results. 

\subsection{Local problem}
In \cite[Section 5]{Sal}, Salmensuu systematically studied Waring pair and proved a  local density version of Waring’s problem \cite[Proposition 5.2]{Sal}. Salmensuu's research findings related to Waring pairs are also essential components in proving our Theorem \ref{thmcont:1.1} and Theorem \ref{thmcont:1.2}.

 First, we introduce the definition of Waring pair.
\begin{definition}
Let $ q,s \in \mathbb{N}$. We say that $ (q,s) $ is a Waring pair if, for any $ B \subseteq Z(q)  $ with $ |B|>\dfrac{1}{2}|Z(q)| $	, we have $ sB=\{a\in \mathbb{Z}_{q}:a\equiv s\ (\bmod (R_{k},q)) \} $.
\end{definition} 
We will prove the following result.
\begin{proposition}\label{pro 4.2}
$ (W,s) $ is a Waring pair for any $ s\geq 8k\omega(k)+2k+2 $.
\end{proposition}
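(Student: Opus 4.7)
My plan is to exploit the Chinese Remainder decomposition of $W$ and reduce the question to a collection of local Waring-pair statements at each prime $p \leq w$, which are essentially the content of Salmensuu's analysis in \cite[Section 5]{Sal}. Since $W = \prod_{1<p\leq w}p^{2k}$, CRT gives
\[
\mathbb{Z}_W \;\cong\; \prod_{p\leq w}\mathbb{Z}_{p^{2k}}, \qquad Z(W) \;\cong\; \prod_{p\leq w} Z(p^{2k}),
\]
and, recalling that $\gamma(k,p)$ only contributes when $(p-1)\mid k$, also
\[
(R_k,W) \;=\; \prod_{\substack{(p-1)\mid k\\ p\leq w}} p^{\gamma(k,p)}.
\]
Consequently the target set $\{a\in\mathbb{Z}_W : a\equiv s\pmod{(R_k,W)}\}$ has a product structure: at primes $p\leq w$ with $(p-1)\mid k$ we need to hit the single residue $s\bmod p^{\gamma(k,p)}$, while at the remaining primes there is no constraint beyond lying in $Z(p^{2k})$.

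The local problem at each $p\leq w$ is then exactly the kind of statement Salmensuu establishes in his Waring-pair lemmas: for $B_p\subseteq Z(p^{2k})$ with $|B_p|>\tfrac{1}{2}|Z(p^{2k})|$, a small number of local sums $s_pB_p$ covers the prescribed coset. I would split the count into two regimes. For primes $p\nmid k$, the $k$-th power map on $(\mathbb{Z}/p^{2k})^{\ast}$ is unramified so $Z(p^{2k})$ is a coset of a subgroup; a half-density subset already satisfies $2B_p=Z(p^{2k})$ by a Cauchy--Davenport-type argument, and a short iteration bounded by $2k+2$ sums accounts for all such primes together. For the $\omega(k)$ primes $p\mid k$, the structure of $k$-th powers modulo $p^{2k}$ is more delicate: one needs Hensel-type lifting from mod $p^{\gamma(k,p)}$ up to mod $p^{2k}$, and Salmensuu's analysis shows this costs up to $8k$ summands per prime, giving a contribution of $8k\omega(k)$ in total. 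Adding the two regimes reproduces the threshold $s\geq 8k\omega(k)+2k+2$.

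The main obstacle I expect is the passage from the global density hypothesis $|B|>\tfrac{1}{2}|Z(W)|$ to usable local density at each prime $p\leq w$: a majority subset of a product group need not project to a majority subset in each factor. To get around this I would fiber $B$ over the coordinates at the ``hard'' primes (those dividing $k$) and apply a pigeonhole argument to locate, for each fixed tuple of hard-prime coordinates, a fiber whose projection to the remaining coordinates still exceeds density $1/2$; one then applies Salmensuu's local result in each factor and recombines via CRT. Keeping careful track of how many sums are spent on the pigeonhole step versus on each local Waring-pair invocation is the bookkeeping that produces the explicit constant, and this bookkeeping, rather than any new number-theoretic input, is the crux of the proof.
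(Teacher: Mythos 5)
Your proposal takes essentially the same route as the paper: decompose $W$ by CRT, treat the primes $p\mid k$ and $p\nmid k$ separately (at a cost of $8k\omega(k)$ and $2k+2$ summands, respectively, exactly as the paper obtains via its Lemma~\ref{lem 4.4} and Salmensuu's Lemmas 5.7 and 5.9), and recombine the two blocks with a gluing lemma for Waring pairs (Salmensuu's Lemma 5.3), which is precisely the fibering/pigeonhole device you identify as the crux. The one slip in your sketch is that the pigeonhole step cannot deliver a dense fiber over \emph{each} tuple of hard-prime coordinates; averaging only guarantees \emph{some} good fiber in each direction, and that is all the gluing argument needs.
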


Please note that although our definition of $ W $ is different from that of Salmensuu \cite[Eq.(8)]{Sal}, we have obtained the same lower bound of $ s $ as Salmensuu.

The following lemma tells us how the elements in $ Z(p^{2k}) $ are distributed in certain cosets of $ p\cdot \mathbb{Z}_{p^{2k}} $.

\begin{lemma}\label{lem 4.3}
	Let $ p>2 $. For all $ a \in Z(p) $, we have \begin{center}
		$ |\{b\in \mathbb{Z}_{p^{2k}}^{(k)}: b\equiv a \ (\bmod\ p)\}|=p^{2k-1-\tau (k,p)} $.
	\end{center}
\end{lemma}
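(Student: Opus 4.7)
The plan is to reinterpret the quantity to be counted as the fiber of a surjective group homomorphism and then to compute both relevant group orders using the structure of $(\mathbb{Z}/p^{2k}\mathbb{Z})^{*}$.

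First I would observe that the hypothesis $a\in Z(p)$ forces $(a,p)=1$, so any $b\in \mathbb{Z}_{p^{2k}}^{(k)}$ with $b\equiv a\pmod p$ is automatically a unit in $\mathbb{Z}_{p^{2k}}$; consequently $b=t^{k}$ with $(t,p)=1$. Writing $U_{m}=(\mathbb{Z}/m\mathbb{Z})^{*}$ and $U_{m}^{(k)}$ for the subgroup of $k$-th powers, the set in question is exactly the fiber over $a$ of the reduction map
\[
\pi: U_{p^{2k}}^{(k)} \longrightarrow U_{p}^{(k)}=Z(p).
\]
Since $\pi$ is a homomorphism of finite abelian groups and is surjective (every $k$-th power residue mod $p$ lifts to a $k$-th power residue mod $p^{2k}$, as it comes from restricting the surjection $U_{p^{2k}}\twoheadrightarrow U_{p}$ to $k$-th powers), each fiber of $\pi$ has the same cardinality $|U_{p^{2k}}^{(k)}|/|Z(p)|$.

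Next I would compute the two orders using the fact that, because $p>2$, the group $U_{p^{2k}}$ is cyclic of order $\varphi(p^{2k})=p^{2k-1}(p-1)$. Applied to a cyclic group, the $k$-th power map has image of index $\gcd(k,|G|)$, so
\[
|U_{p^{2k}}^{(k)}|=\frac{p^{2k-1}(p-1)}{\gcd(k,\,p^{2k-1}(p-1))},\qquad |Z(p)|=\frac{p-1}{\gcd(k,p-1)}.
\]
Using that $\gcd(p^{2k-1},p-1)=1$, the gcd in the numerator factors multiplicatively as $\gcd(k,p^{2k-1})\cdot \gcd(k,p-1)$; and since $\tau(k,p)=v_{p}(k)\le \log_{p}k\le 2k-1$, we have $\gcd(k,p^{2k-1})=p^{\tau(k,p)}$.

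Putting these together,
\[
\frac{|U_{p^{2k}}^{(k)}|}{|Z(p)|}
=\frac{p^{2k-1}(p-1)/(p^{\tau(k,p)}\gcd(k,p-1))}{(p-1)/\gcd(k,p-1)}
=p^{2k-1-\tau(k,p)},
\]
which is the claimed cardinality. There is no real obstacle here beyond the bookkeeping of the gcd factorization and the verification that $\pi$ is surjective; the only place the hypothesis $p>2$ is essential is in invoking the cyclicity of $U_{p^{2k}}$, which is what makes the index-equals-gcd formula valid and allows the clean cancellation above.
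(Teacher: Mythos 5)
Your proof is correct. The paper itself does not spell out an argument but simply defers to Salmensuu's Lemma~5.4 (which is the analogous statement for modulus $p^k$, to be adapted here to $p^{2k}$); your argument is a clean, self-contained version of the same underlying idea, packaged as a fiber-count for the reduction homomorphism $\pi\colon U_{p^{2k}}^{(k)}\to U_p^{(k)}$.

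Two small points worth making explicit if you wrote this up. First, the identification of the set in question with $\pi^{-1}(a)$ uses both inclusions: any $b\in\mathbb{Z}_{p^{2k}}^{(k)}$ reducing to a unit $a$ is itself a unit and hence lies in $U_{p^{2k}}^{(k)}$, and conversely $U_{p^{2k}}^{(k)}\subseteq \mathbb{Z}_{p^{2k}}^{(k)}$ with $\pi$ landing in $U_p^{(k)}=Z(p)$; you state the forward direction but the reverse inclusion deserves a word. Second, the chain $\tau(k,p)\le \log_p k\le 2k-1$ is fine but could be stated more directly: $p^{\tau(k,p)}\mid k$ gives $p^{\tau(k,p)}\le k\le 2^{2k-1}\le p^{2k-1}$, so $\tau(k,p)\le 2k-1$ and $\gcd(k,p^{2k-1})=p^{\tau(k,p)}$. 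Everything else — surjectivity of $\pi$, multiplicativity of $\gcd$ across coprime factors, cyclicity of $U_{p^{2k}}$ for odd $p$, and the index-$\gcd$ formula for the $k$-th power map on a cyclic group — is used correctly, and the hypothesis $p>2$ is invoked exactly where it is needed.
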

\begin{proof}
	See \cite[proof of Lemma 5.4]{Sal}.
\end{proof}
Next, we present a result on the local problem for prime power moduli.
\begin{lemma}\label{lem 4.4}
$ (p^{2k},s) $ is a Waring pair for all $ s\geq 8k $.	
\end{lemma}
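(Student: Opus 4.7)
The plan is to follow Salmensuu's approach to the local problem at prime power moduli, adapted to our definition of $W$. The strategy is a ``project-and-lift'' scheme centered on the reduction map $\pi\colon\mathbb{Z}_{p^{2k}}\to\mathbb{Z}_{p}$. Lemma \ref{lem 4.3} tells us that for $p>2$ the set $Z(p^{2k})$ is uniformly fibered over $Z(p)$, each fiber having size $p^{2k-1-\tau(k,p)}$. Consequently the hypothesis $|B|>\tfrac{1}{2}|Z(p^{2k})|$ descends immediately to the projection $\bar B:=\pi(B)\subseteq Z(p)$: one has $|\bar B|>\tfrac{1}{2}|Z(p)|$.

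The first main step is to prove that for $s$ of order $k$, the sumset $s\bar B$ fills the correct target in $\mathbb{Z}_p$. If $(p-1)\mid k$ then $Z(p)=\{1\}$ and the mod-$p$ statement is vacuous, so assume $(p-1)\nmid k$ and set $d:=\gcd(k,p-1)$, so that $|Z(p)|=(p-1)/d\geq 2$. Since $d\mid k$, the Cauchy--Davenport inequality in the prime-order group $\mathbb{Z}_p$ yields $|s\bar B|\geq\min(p,\,s(|\bar B|-1)+1)$, and a short case analysis on $|Z(p)|$ (splitting off the exceptional case $|Z(p)|=2$, which forces $p\leq 2k+1$ and is handled by noting that $\bar B=Z(p)$ and $s\bar B$ is then an arithmetic progression of length $s+1$ in $\mathbb{Z}_p$) shows that $s\bar B=\mathbb{Z}_p$ as soon as $s\geq 4k$.

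The second step lifts this mod-$p$ coverage to mod $p^{2k}$. For any target $a\in\mathbb{Z}_{p^{2k}}$, one writes $\pi(a)$ as a sum of $s_0\leq 4k$ elements of $\bar B$ and picks arbitrary preimages in $B$; the residual error lies in $p\mathbb{Z}_{p^{2k}}$ and is corrected by adjusting each summand inside its fiber, whose uniform richness is guaranteed by Lemma \ref{lem 4.3}. When $(p-1)\mid k$ one must further track the congruence modulo $p^{\gamma(k,p)}=p^{\tau(k,p)+1}$; the structural input here is that on the kernel of $\pi$, the set $Z(p^{2k})$ coincides with the subgroup $1+p^{\tau(k,p)+1}\mathbb{Z}_{p^{2k}}$ of $(\mathbb{Z}/p^{2k})^{*}$, so any sum of elements of $B$ is automatically in the correct residue class modulo $p^{\gamma(k,p)}$. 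This lifting phase consumes at most another $4k$ summands, giving the total budget $s\geq 8k$ claimed in the statement.

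The main obstacle is the bookkeeping: one must verify that the Cauchy--Davenport step, the fiber-correction step, and, in the case $(p-1)\mid k$, the mod-$p^{\tau(k,p)+1}$ refinement each require only a linear-in-$k$ number of summands, and that the constants sum to at most $8k$. The edge case $p=2$, excluded from Lemma \ref{lem 4.3}, requires a separate direct computation using the explicit structure $(\mathbb{Z}/2^{2k})^{*}\cong\mathbb{Z}/2\times\mathbb{Z}/2^{2k-2}$, which accounts for the extra $+1$ in the definition of $\gamma(k,2)$ when $2\mid k$.
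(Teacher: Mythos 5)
Your project-and-lift framework is the right shape, and it matches in spirit what the paper does (the paper simply cites Salmensuu's Lemma 5.6 and notes that Lemma \ref{lem 4.3} replaces Salmensuu's fiber-counting lemma when $p-1\nmid k$). Your handling of the case $(p-1)\mid k$ is fine, as is the observation that $Z(p^{2k})\cap(1+p\mathbb{Z}_{p^{2k}})=1+p^{\tau(k,p)+1}\mathbb{Z}_{p^{2k}}$, which forces each fiber of $\pi$ on $Z(p^{2k})$ to be an \emph{additive} coset of $p^{\tau(k,p)+1}\mathbb{Z}_{p^{2k}}$.

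The lifting step, however, has a real gap. You assert that after picking arbitrary preimages in $B$ the residual error ``is corrected by adjusting each summand inside its fiber, whose uniform richness is guaranteed by Lemma \ref{lem 4.3}.'' Lemma \ref{lem 4.3} controls the fibers of $Z(p^{2k})$, not the fibers of $B$: the hypothesis $|B|>\tfrac12|Z(p^{2k})|$ is perfectly compatible with $B$ meeting many fibers in a single element (or none), and then there is no room to adjust a summand inside its own fiber at all. What rescues the lift is an averaging step you have not included: since the fibers are equinumerous, some fiber $F_0$ has $|B\cap F_0|>\tfrac12|F_0|$, and because $F_0$ is a coset of $p^{\tau(k,p)+1}\mathbb{Z}_{p^{2k}}$, for any $c\in F_0+F_0$ the sets $B\cap F_0$ and $c-(B\cap F_0)$ both lie in $F_0$ with density exceeding one half, hence intersect; so just two summands drawn from $B\cap F_0$ realize every element of the coset $F_0+F_0$. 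Relatedly, once you pick summands by their mod-$p$ fiber, the sum is already pinned down modulo $p^{\tau(k,p)+1}$ (not just modulo $p$), so the preliminary covering stage has to produce the target modulo $p^{\tau(k,p)+1}$; Cauchy--Davenport in $\mathbb{Z}_p$ alone is not enough when $p\mid k$, and you need a prime-power substitute there. Both points are exactly where Lemma \ref{lem 4.3} enters Salmensuu's argument, and both are missing from your sketch.
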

\begin{proof}
	The case $ (p^{k},s) $ has been shown by Salmensuu \cite[Lemma 5.6]{Sal}. With minor changes the same proof also works for our case. The main difference is that when $ p-1\nmid k $ we use Lemma \ref{lem 4.3} in place of \cite[Lemma 5.4]{Sal}.
\end{proof}
$ \mathit{Proof \ of \ Proposition \ 4.2  } $. By \cite[Lemma 5.3]{Sal} and Lemma \ref{lem 4.4}, $ \big(\prod_{p\leq \omega,\ p|k}p^{2k},s\big) $ is a Waring pair for all $ s\geq 8k\omega(k) $. On the other hand, by \cite[Lemma 5.7]{Sal} and \cite[Lemma 5.9]{Sal}, $ \big(\prod_{p\leq \omega,\ p\nmid k}p^{2k},s\big) $ is a Waring pair for all $ s\geq 2k+2 $. Using \cite[Lemma 5.3]{Sal} and the previous results, $ (W,s) $ is a Waring pair for any $ s\geq 8k\omega(k)+2k+2 $.\qquad \qquad \qquad  \qquad \qquad \qquad  \qquad \qquad \qquad \qquad \qquad \qquad  \qquad \qquad  \qquad  \qquad\qquad \qquad \qquad\qquad \qquad \qquad \ \ \ \ \  $ \qedsymbol $

\subsection{Mean result}
First, we present a generalized version of the local problem.
\begin{lemma}\label{lem 4.5}
	Let $ f:Z(W)\rightarrow [0,1) $ satisfy $ \mathbb{E}_{b \in Z(W)}f(b)>1/2 $. Let $ s\geq 16k\omega(k)+4k+4 $. Then, for all $ n \in \mathbb{Z}_{W} $ with $  n \equiv s \ (\bmod \ R_{k}) $, there exist $ b_{1},\ldots,b_{s}\in Z(W) $ such that $ n\equiv b_{1}+\cdots+b_{s} (\bmod\ W),\ f(b_{i})>0 $ for all $ i \in \{1,\ldots,s\} $ and 
	\begin{center}
		$ f(b_{1})+\cdots+f(b_{s})>s/2. $
	\end{center}
\end{lemma}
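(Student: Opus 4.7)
The plan is to combine two applications of Proposition \ref{pro 4.2} with an averaging argument that exploits both the strict inequality $\mathbb{E}_{b \in Z(W)} f(b) > 1/2$ and the pointwise bound $f < 1$. First I observe that the support $B := \{b \in Z(W) : f(b) > 0\}$ satisfies $|B| > |Z(W)|/2$: since $f(b) < 1$ pointwise,
\[
|B| \;>\; \sum_{b \in Z(W)} f(b) \;=\; |Z(W)|\,\mathbb{E}_{b \in Z(W)} f(b) \;>\; |Z(W)|/2,
\]
so $B$ is admissible in Proposition \ref{pro 4.2}. I then write $s = s_1 + s_2$ with $s_1, s_2 \geq 8k\omega(k) + 2k + 2$, which is possible because $s \geq 16k\omega(k) + 4k + 4$. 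For any decomposition $n \equiv m_1 + m_2 \pmod{W}$ with $m_j \equiv s_j \pmod{R_k}$, two uses of Proposition \ref{pro 4.2} produce representations $m_j \equiv c_1^{(j)} + \cdots + c_{s_j}^{(j)} \pmod{W}$ with $c_i^{(j)} \in B$; concatenation then yields a representation of $n$ as a sum of $s$ elements of $B$ all satisfying $f > 0$.

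To secure $\sum f(b_i) > s/2$, I plan to average over all valid tuples $(b_1, \ldots, b_s) \in B^s$ with $\sum_i b_i \equiv n \pmod{W}$. Setting $r(b) := |\{(b_2, \ldots, b_s) \in B^{s-1} : \sum_{i \geq 2} b_i \equiv n - b \pmod{W}\}|$, coordinate symmetry gives
\[
\mathrm{avg}\,\sum_{i=1}^{s} f(b_i) \;=\; s\,\frac{\sum_{b \in B} f(b)\,r(b)}{\sum_{b \in B} r(b)}.
\]
Heuristically $r(b)$ should be nearly uniform in $b \in B$, because the residues $n-b$ all lie in the single class $\equiv s-1 \pmod{R_k}$ (every element of $Z(W)$ is $\equiv 1 \pmod{R_k}$ by the definition of $R_k$) and the density $|B|/|Z(W)| > 1/2$ rules out Fourier pathologies of $1_B$. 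In that regime the averaged quantity equals $s\,\mathbb{E}_{b \in B} f(b) = s\,|Z(W)|\,\mathbb{E}_{b \in Z(W)} f(b)/|B| > s/2$ (using $|B| \leq |Z(W)|$ and the hypothesis), so by pigeonhole some tuple attains $\sum f(b_i) > s/2$.

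The main obstacle I anticipate is rigorously controlling the non-uniformity of $r(b)$ across $b \in B$. The double application of Proposition \ref{pro 4.2}---permitted exactly by the doubling $s \geq 2(8k\omega(k) + 2k + 2)$---immediately guarantees $r(b) \geq 1$ for every $b \in B$ (fix $b_1 = b$ and apply Proposition \ref{pro 4.2} to the remaining $s-1 \geq 8k\omega(k) + 2k + 2$ coordinates, noting $n - b \equiv s-1 \pmod{R_k}$). A quantitative refinement---either a Fourier bound on $\widehat{1_B}$ away from the trivial frequency or a direct combinatorial count that again invokes Proposition \ref{pro 4.2} internally on the last $s-1$ coordinates---should upgrade this to the approximate uniformity needed, with the strict slack $\mathbb{E} f - 1/2 > 0$ and $|Z(W)| - |B| \geq 1$ providing room to preserve the strict inequality $\sum f(b_i) > s/2$ in the conclusion.
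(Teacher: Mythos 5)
The first part of your argument is sound: the bound $|B| > |Z(W)|/2$ follows correctly from $f<1$ and $\mathbb{E}_{b\in Z(W)}f(b)>1/2$, and the double application of Proposition \ref{pro 4.2} (made possible by $s \geq 2(8k\omega(k)+2k+2)$) does produce at least one admissible tuple with $f(b_i)>0$ throughout. But the step you yourself flag is a genuine gap, not a technicality. Your averaging argument needs the weighted mean $\sum_{b\in B} f(b)\,r(b)\big/\sum_{b\in B} r(b)$ to exceed $1/2$, and the heuristic ``$r(b)$ is nearly uniform because $|B|>|Z(W)|/2$'' does not follow: for fixed $s$, the nonzero Fourier coefficients of $1_B$ (and of $1_{Z(W)}$ itself, since $Z(W)$ is a highly structured subset of $\mathbb{Z}_W$) need not be small, so $r(b)$ can be strongly skewed. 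If $f$ happens to take its small values precisely where $r(b)$ is large, the weighted average drops below $1/2$, and the pigeonhole step gives nothing. Establishing the required near-uniformity would itself demand a substantive Fourier estimate that you have not supplied, and it is not at all clear that the hypotheses of the lemma force it.

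The lemma can be proved cleanly without any count of representations, via a maximal-element-plus-threshold argument, which is also closer in spirit to what the paper is implicitly invoking from Salmensuu. Set $A_c := \{b\in Z(W) : f(b)\geq c\}$ and let $c^* := \max\{c : |A_c| > |Z(W)|/2\}$; the set is nonempty (since $|A_0|=|Z(W)|$), and left-continuity of $c\mapsto |A_c|$ shows the maximum is attained with $|A_{c^*}| > |Z(W)|/2$. Your bound $|B|>|Z(W)|/2$ gives $c^*>0$. Let $M := \max_{b} f(b)$, attained at some $a^*$; then $M \geq \mathbb{E}_{b}f(b) > 1/2$. The key inequality is $M + c^* > 1$: writing $p := |\{f>c^*\}| \leq |Z(W)|/2 \leq q := |\{f\leq c^*\}|$, one has $pM + qc^* \geq \sum_b f(b) > (p+q)/2$, hence $q(1/2-c^*) < p(M-1/2) \leq q(M-1/2)$, and dividing by $q>0$ gives $M+c^*>1$. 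Now split $s=s_1+s_2$ with $s_1 \geq s_2 \geq 8k\omega(k)+2k+2$ (possible as $s\geq 16k\omega(k)+4k+4$), set $b_1=\cdots=b_{s_1}=a^*$, put $m_2 := n - s_1 a^* \pmod W$; since every element of $Z(W)$ is $\equiv 1\pmod{R_k}$, $m_2 \equiv s_2 \pmod{R_k}$, and Proposition \ref{pro 4.2} applied to $A_{c^*}$ with $s_2$ summands yields $b_{s_1+1},\ldots,b_s \in A_{c^*}$ summing to $m_2$. Then $f(b_i) \geq \min(M,c^*)=c^*>0$ for all $i$, and
\[
\sum_{i=1}^{s} f(b_i) \;\geq\; s_1 M + s_2 c^* \;=\; \tfrac{s}{2} + s_1(M-\tfrac12) - s_2(\tfrac12-c^*) \;>\; \tfrac{s}{2},
\]
using $s_1\geq s_2$ and $M-\tfrac12 > \tfrac12 - c^*$. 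This is elementary, uses Proposition \ref{pro 4.2} only once, and sidesteps the representation-counting issue entirely. I would recommend replacing the averaging plan with an argument of this type, or supplying the missing quantitative uniformity estimate for $r(b)$ if you want to keep the averaging route.
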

\begin{proof}
	The case $ W=\prod_{1<p\leq w}p^{k} $ has been shown by Salmensuu \cite[Lemma 6.4]{Sal}. Using Proposition \ref{pro 4.2} instead of \cite[Proposition 5.2]{Sal}, the same proof also works for our case.
\end{proof}
As pointed out in \cite[Section 2]{Chow}, we have the following lemma.
\begin{lemma}\label{lem 4.6}
	For each $ b\in [W] $ with $ b \in Z(W),\ \sigma(b) $ does not depend on $ b $. In fact, we have
	\begin{center}
		$  \sigma(b)=\dfrac{\varphi(W)}{|Z(W)|} . $
	\end{center} 
\end{lemma}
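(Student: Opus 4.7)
The plan is to view $\sigma(b)$ as the size of a fibre of the $k$-th power map on the unit group $(\mathbb{Z}/W\mathbb{Z})^{*}$, and to observe that all nonempty fibres of a group homomorphism have the same cardinality.

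First I would note a coprimality reduction: if $z \in [W]$ satisfies $z^{k} \equiv b \pmod{W}$ with $b \in Z(W)$, then in particular $(b,W)=1$, and hence $(z,W)=1$. Therefore
\begin{center}
$\sigma(b) = \#\{z \in [W] : z^{k} \equiv b \pmod{W},\ (z,W)=1\}$,
\end{center}
so $\sigma(b)$ counts preimages of $b$ under the $k$-th power map restricted to the residues in $[W]$ coprime to $W$. Since $[W] = \{1,\ldots,W\}$ is a complete system of residues modulo $W$, this count agrees with the number of preimages in $(\mathbb{Z}/W\mathbb{Z})^{*}$.

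Next I would introduce the group homomorphism $\phi : (\mathbb{Z}/W\mathbb{Z})^{*} \to (\mathbb{Z}/W\mathbb{Z})^{*}$ given by $\phi(z) = z^{k}$. By the definition of $Z(W)$, the image of $\phi$ is exactly $Z(W)$. For any $b \in Z(W)$, the fibre $\phi^{-1}(b)$ is a coset of $\ker \phi$, hence $\sigma(b) = |\ker \phi|$, which is manifestly independent of $b$. Applying the first isomorphism theorem gives
\begin{center}
$|\ker \phi| = \dfrac{|(\mathbb{Z}/W\mathbb{Z})^{*}|}{|\mathrm{image}(\phi)|} = \dfrac{\varphi(W)}{|Z(W)|}$,
\end{center}
which yields the asserted formula.

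There is no real obstacle here; the only minor point to be careful about is the coprimality reduction in the first step, which is needed to justify passing from the count over $[W]$ to a count inside the unit group (this is where the hypothesis $b \in Z(W)$, rather than merely $b \in \mathbb{Z}_{W}^{(k)}$, is used). Once that is in place the result is immediate from elementary group theory.
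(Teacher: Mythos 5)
Your proof is correct and complete. The paper itself gives no proof of Lemma~\ref{lem 4.6}; it simply remarks that the statement is pointed out in Chow's paper. Your argument supplies the missing details and is exactly the standard one: the coprimality reduction (from $b\in Z(W)$ to $(z,W)=1$) lets you view $\sigma(b)$ as the size of a fibre of the $k$-th power endomorphism $\phi$ of $(\mathbb{Z}/W\mathbb{Z})^{\ast}$; the image of $\phi$ is precisely $Z(W)$ (since a $k$-th power that is coprime to $W$ necessarily comes from a unit); and nonempty fibres of a group homomorphism are cosets of the kernel, giving $\sigma(b)=|\ker\phi|=\varphi(W)/|Z(W)|$ by the first isomorphism theorem, independent of $b$. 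This is the natural argument and there is nothing to object to.
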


Next, we prove two results concerning lower bounds for $ \mathbb{E}_{b \in Z(W)}g(b,N) $ and $ \mathbb{E}_{b \in Z(W)}\mathbf{g}(b,N) $.
\begin{lemma}\label{lem 4.7}
Let $ \epsilon \in (0,1/4) $. Then
\begin{center}
	$ \mathbb{E}_{b \in Z(W)}g(b,N) \geq k\delta_{A}-(k-1)-\epsilon $
\end{center}
provided that $ N $ is large enough depending on $ \epsilon $.
\end{lemma}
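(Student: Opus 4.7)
The plan is to unfold the expectation and reduce it to a weighted sum of $kp^{k-1}\log p$ over primes $p \in A$ up to $X := (WN+W)^{1/k}$, then separate $A$ from its complement in $\mathbb{P}$. By Lemma \ref{lem 4.6}, $\sigma(b) = \varphi(W)/|Z(W)|$ is constant on $b \in Z(W)$. For each prime $p$ with $(p,W)=1$ and $p^k \in [W+1, WN+W]$, the pair $(b,n) = (p^k \bmod W,\, \lfloor p^k/W \rfloor)$ is the unique pair in $Z(W) \times [N]$ with $Wn+b = p^k$. Swapping the order of summation therefore gives
\begin{equation*}
\mathbb{E}_{b \in Z(W)} g(b,N) \;=\; \frac{1}{NW} \sum_{\substack{p \in A,\ (p,W)=1 \\ (W+1)^{1/k} \leq p \leq X}} k p^{k-1} \log p.
\end{equation*}
Since $X^k/(NW) = 1 + 1/N \to 1$, it suffices to produce a lower bound of shape $(k\delta_A - (k-1) - \epsilon + o(1)) X^k$ for the inner sum.

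To do this I would write $\sum_{p \in A,\,p \leq X} = \sum_{p \leq X} - \sum_{p \notin A,\,p \leq X}$. Abel summation with the prime number theorem $\theta(X) = X + o(X)$ yields $\sum_{p \leq X} k p^{k-1} \log p = X^k + o(X^k)$. For the complementary sum I would use the crude bound $p^{k-1}\log p \leq X^{k-1}\log X$ together with the density hypothesis: for any $\epsilon' > 0$ and $N$ large enough, $|(\mathbb{P} \setminus A) \cap [X]| \leq (1 - \delta_A + \epsilon') \pi(X)$, so
\begin{equation*}
\sum_{p \notin A,\,p \leq X} k p^{k-1} \log p \;\leq\; k (1 - \delta_A + \epsilon') \pi(X)\, X^{k-1} \log X \;=\; k(1 - \delta_A + \epsilon') X^k + o(X^k),
\end{equation*}
using $\pi(X) \log X \sim X$. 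Subtracting gives the lower bound $(k\delta_A - (k-1) - k\epsilon') X^k + o(X^k)$ for the inner sum, and choosing $\epsilon' = \epsilon/(2k)$ finishes the argument.

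There is no deep obstacle, only book-keeping. The primes $p < (W+1)^{1/k}$ excluded from the inner sum contribute only $O(W)$ to the total, which is $o(NW)$; and since $W = o(\log N)$, all $o(X^k)$ remainders stay uniformly small relative to $NW$. The essential step is the trivial estimate $p^{k-1} \leq X^{k-1}$, which is what converts the density loss $1 - \delta_A$ into the linear loss $k(1-\delta_A)$ that appears in the conclusion; a finer argument would even give $\delta_A^k$, but the weaker Bernoulli-type bound $\delta_A^k \geq k\delta_A - (k-1)$ is all that the statement requires.
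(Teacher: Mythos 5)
Your proof is correct, and it takes a genuinely different route from the paper. The paper applies partial summation directly to the counting function $A(t)=|\{p\le t: p\in A\}|$ inside the interval $(X_1,X_2]$ with $X_1=(2W)^{1/k}$, $X_2=(WN+1)^{1/k}$: it bounds $A(X_2)$ from below by $(1-o(1))\delta_A X_2/\log X_2$ and bounds $A(t)\le\pi(t)$ from above everywhere else (in the integral term $-\int A(t)g'(t)\,dt$, which has a negative sign), arriving at $(k\delta_A-(k-1)-o(1))X_2^k$ in one pass. You instead decompose $A=\mathbb{P}\setminus(\mathbb{P}\setminus A)$, compute $\sum_{p\le X}kp^{k-1}\log p = X^k + o(X^k)$ exactly via Abel summation and the prime number theorem, and control the complementary sum by the crude pointwise bound $kp^{k-1}\log p\le kX^{k-1}\log X$ together with the density hypothesis $|(\mathbb{P}\setminus A)\cap[X]|\le(1-\delta_A+\epsilon')\pi(X)$. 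Both arguments lose the same factor and land on $k\delta_A-(k-1)$; your complement trick sidesteps the sign bookkeeping in the partial-summation integral and makes it visually transparent where the Bernoulli-type loss comes from (concentrating the removed primes at the top of the range). Your parenthetical remark that a sharper rearrangement argument would yield $\delta_A^k$ (which exceeds $k\delta_A-(k-1)$) is also correct and is in fact a small observation not exploited in the paper; it would marginally improve the density threshold in Theorem \ref{thmcont:1.1} from $1-1/(2k)$ to $2^{-1/k}$, though the paper's Theorem \ref{thmcont:1.2} already supersedes this under a stronger hypothesis.
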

\begin{proof}
	In view of $ (\ref{equ1}) $, we have $ w<\log W $ and $ (p,W)=1 $ for any $ p >w $. Therefore, with $ p>(2W)^{1/k} $, we have $ (p,W)=1 $ provided that $ N $ is large enough.
	In addition, note that each prime $ p \in A  $ with $ (2W)^{1/k}<p\leq (WN+1)^{1/k} $ corresponds to a unique pair $ (n,b), \ n\in [N] , \ b \in [W]$ such that $ Wn+b=p^{k}, \ b\in Z(W)  $. Therefore, by Lemma \ref{lem 4.6}, we have
	\begin{center}
		$ \begin{aligned}
		\mathbb{E}_{b \in Z(W)}g(b,N)
		&=\dfrac{1}{|Z(W)|N}\sum\limits_{\substack{b \in [W] \\ b \in Z(W)}}\sum\limits_{\substack{n\in [N] \\ Wn+b=p^{k}\\ p \in A}}\dfrac{\varphi(W)}{W\sigma(b)}kp^{k-1}\log p\\
		&=\dfrac{1}{WN}\sum\limits_{\substack{b \in [W] \\ b \in Z(W)}}\sum\limits_{\substack{n\in [N] \\ Wn+b=p^{k}\\ p \in A}}kp^{k-1}\log p\\
		&\geq \dfrac{1}{WN}\sum\limits_{\substack{p \in A \\ (2W)^{1/k}<p\leq (WN+1)^{1/k} }}kp^{k-1}\log p. \\
		\end{aligned} $
	\end{center}
		Let $ X_{1}= (2W)^{1/k} $ and $ X_{2}= (WN+1)^{1/k}  $. Define
	\begin{center}
		$\begin{aligned} A(t):=\sum\limits_{\substack{p\leq t \\ p \in A}}1 \end{aligned}$
	\end{center}
	and
	\begin{center}
		$ g(t):=kt^{k-1}\log t $.
	\end{center}
Next, we give a lower bound for \begin{center}
	$\begin{aligned} \sum\limits_{\substack{ p \in A \\ X_{1}<p\leq X_{2}}}kp^{k-1}\log p .\end{aligned}$
\end{center} 
	This sum is treated using partical summation.	
\begin{center}
	$ \begin{aligned}
	\sum\limits_{\substack{ p \in A \\ X_{1}<p\leq X_{2}}}kp^{k-1}\log p 
	&=A(X_{2})g(X_{2})-A(X_{1})g(X_{1})-\int_{X_{1}}^{X_{2}}A(t)g'(t)dt\\
	&\geq (1-o(1))\delta_{A}(X_{2}/\log X_{2})g(X_{2})-(1+o(1))(X_{1}/\log X_{1})g(X_{1})\\
	&\ \ \ -(1+o(1))\int_{X_{1}}^{X_{2}}\big(t/\log t\big)\big(k(k-1)t^{k-2}\log t+kt^{k-2}\big)dt\\
	&=(1-o(1))\delta_{A}kX_{2}^{k}-(1+o(1))(k-1)\int_{X_{1}}^{X_{2}}kt^{k-1}dt-(1+o(1))\int_{X_{1}}^{X_{2}}(kt^{k-1}/\log t) dt\\
	&\geq (1-o(1))\delta_{A}kX_{2}^{k}-(1+o(1))(k-1)X_{2}^{k}-(1+o(1))X_{2}^{k}/\log X_{1}\\
	&=\big(k\delta_{A}-(k-1)-o(1) \big)X_{2}^{k}.
	 \end{aligned}$
\end{center}
Therefore, we have 
\begin{center}
	$ \mathbb{E}_{b \in Z(W)}g(b,N)\geq k\delta_{A}-(k-1)-\epsilon $
\end{center}
provided that $ N $ is large enough depending on $ \epsilon $.	
\end{proof}
\begin{lemma}\label{lem 4.8}
Let $ \epsilon \in (0,1/6) $. Then
\begin{center}
	$ \mathbb{E}_{b \in Z(W)}\mathbf{g}(b,N) \geq (1-\epsilon)\delta $
\end{center}
provided that $ N $ is large enough depending on $ \epsilon $.	
\end{lemma}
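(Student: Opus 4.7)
The plan is to mimic the proof of Lemma \ref{lem 4.7} almost verbatim, but to exploit the fact that $ \mathcal{P} $ satisfies a genuine asymptotic $ \pi_{\mathcal{P}}(t)\sim \delta\pi(t) $ rather than merely a lower-density bound. This stronger input lets us apply the same constant $ \delta $ both to the leading partial-summation term and to the subtracted integral, producing a clean cancellation of the $ (k-1) $ loss that appeared in Lemma \ref{lem 4.7}.

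First I would reproduce the reduction used there. By Lemma \ref{lem 4.6} we have $ \varphi(W)/(W\sigma(b))=|Z(W)|/W $, and for $ N $ large enough every prime $ p\in\mathcal{P} $ with $ (2W)^{1/k}<p\leq (WN+1)^{1/k} $ satisfies $ (p,W)=1 $ and corresponds to a unique pair $ (n,b) $ with $ n\in[N] $, $ b\in Z(W) $, $ Wn+b=p^{k} $. Hence
\begin{center}
$ \mathbb{E}_{b\in Z(W)}\mathbf{g}(b,N)\geq \dfrac{1}{WN}\sum\limits_{\substack{p\in \mathcal{P}\\ (2W)^{1/k}<p\leq (WN+1)^{1/k}}}kp^{k-1}\log p. $
\end{center}

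Next, setting $ X_{1}=(2W)^{1/k} $, $ X_{2}=(WN+1)^{1/k} $, $ g(t)=kt^{k-1}\log t $, and $ \mathcal{P}(t):=\#\{p\in\mathcal{P}:p\leq t\} $, partial summation yields
\begin{center}
$ \sum\limits_{\substack{p\in \mathcal{P}\\ X_{1}<p\leq X_{2}}} kp^{k-1}\log p = \mathcal{P}(X_{2})g(X_{2})-\mathcal{P}(X_{1})g(X_{1})-\int_{X_{1}}^{X_{2}}\mathcal{P}(t)g'(t)dt. $
\end{center}
The hypothesis $ \pi_{\mathcal{P}}(t)\sim \delta\pi(t) $ combined with PNT gives the two-sided estimate $ \mathcal{P}(t)=(1+o(1))\delta t/\log t $. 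The main term contributes $ (1+o(1))\delta k X_{2}^{k} $; the boundary term at $ X_{1} $ is $ O(X_{1}^{k}\log X_{1})=O(W\log W)=o(WN) $ because $ W=o(\log N) $; and the integral, after expanding $ g'(t)=k(k-1)t^{k-2}\log t+kt^{k-2} $, evaluates to $ (1+o(1))\delta(k-1)X_{2}^{k}+O(X_{2}^{k}/\log X_{1}) $. Combining, the sum equals $ (\delta-o(1))X_{2}^{k} $. Since $ X_{2}^{k}/(WN)=1+o(1) $, dividing by $ WN $ gives
\begin{center}
$ \mathbb{E}_{b\in Z(W)}\mathbf{g}(b,N)\geq \delta-o(1)\geq (1-\epsilon)\delta $
\end{center}
provided $ N $ is large enough depending on $ \epsilon $.

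The only conceptual departure from the proof of Lemma \ref{lem 4.7} is that here the asymptotic for $ \mathcal{P}(t) $ is used as both an upper and a lower bound inside the integral; under the weaker lower-density hypothesis of Lemma \ref{lem 4.7} one could only substitute $ A(t)\leq (1+o(1))\pi(t) $ in the integral, which is what produced the $ (k-1) $ deficit. With matching constants on both ends, the coefficient collapses to $ k\delta-(k-1)\delta=\delta $. I do not foresee any substantive obstacle: the bound $ W=o(\log N) $ is exactly what absorbs the boundary term at $ X_{1} $ and the $ 1/\log t $ secondary integral into the $ o(1) $.
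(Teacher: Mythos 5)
Your proposal is correct and follows essentially the same route as the paper: partial summation with $\pi_{\mathcal{P}}(t)\sim\delta t/\log t$ applied as a two-sided estimate in both the boundary term and the subtracted integral, yielding $(k-(k-1))\delta X_{2}^{k}=\delta X_{2}^{k}$ up to lower order. Your observation that the $(k-1)$ deficit in Lemma~\ref{lem 4.7} comes from only having a lower bound for $A(t)$ (forcing the trivial upper bound $\pi(t)$ in the integral) is exactly the point, and the treatment of the $X_{1}$ boundary term as $O(W\log W)=o(WN)$ via $W=o(\log N)$ matches what the paper implicitly does.
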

\begin{proof}
Similarly, we have
\begin{center}
	$\begin{aligned} \mathbb{E}_{b \in Z(W)}\mathbf{g}(b,N) \geq \dfrac{1}{WN}\sum\limits_{\substack{p \in \mathcal{P} \\ (2W)^{1/k}<p\leq (WN+1)^{1/k} }}kp^{k-1}\log p. \end{aligned}$
\end{center}
Again using partical summation, we have
\begin{center}
	$\begin{aligned}
 \sum\limits_{\substack{p \in \mathcal{P} \\ X_{1}<p\leq X_{2} }}kp^{k-1}\log p 
&=\pi_{\mathcal{P}}(X_{2})g(X_{2})-\pi_{\mathcal{P}}(X_{1})g(X_{1})-\int_{X_{1}}^{X_{2}}\pi_{\mathcal{P}}(t)g'(t)dt	\\
&\geq (1-o(1))\delta(X_{2}/\log X_{2})g(X_{2})-(1+o(1))\delta(X_{1}/\log X_{1})g(X_{1})\\
&\ \ \ -(1+o(1))\delta\int_{X_{1}}^{X_{2}}\big(t/\log t\big)\big(k(k-1)t^{k-2}\log t+kt^{k-2}\big)dt\\
&=(1-o(1))k\delta X_{2}^{k}-(1+o(1))(k-1)\delta \int_{X_{1}}^{X_{2}}kt^{k-1}dt-(1+o(1))\delta \int_{X_{1}}^{X_{2}}(kt^{k-1}/\log t)dt\\
&\geq (1-o(1))k\delta X_{2}^{k}-(1+o(1))(k-1)\delta X_{2}^{k}-(1+o(1))\delta X_{2}^{k}/\log X_{1}\\
&=(1-o(1))\delta X_{2}^{k}.
\end{aligned}$
\end{center}
Therefore, we have 
\begin{center}
	$ \mathbb{E}_{b \in Z(W)}\mathbf{g}(b,N)\geq (1-\epsilon)\delta $
\end{center}
provided that $ N $ is large enough depending on $ \epsilon $.
\end{proof}
Finally, we use these two results and Lemma \ref{lem 4.5} to prove two mean results.
\begin{proposition}\label{pro 4.9}
Let $ \epsilon \in (0,1/4) $ and let $ N $ be sufficiently large depending on $ \epsilon $. Let $ \delta_{A}>1-1/2k+2\epsilon/k $  and $ s\geq 16k\omega(k)+4k+4 $. Then, for all $ n \in \mathbb{Z}_{W} $ with $ n \equiv s \ (\bmod \ R_{k}) $, there exist $ b_{1},\ldots,b_{s} \in Z(W) $ such that $ n \equiv b_{1}+     \cdots+b_{s}(\bmod \ W ), \ g(b_{i},N) > \epsilon/2$ for all $ i \in \left\lbrace 1,\ldots,s\right\rbrace  $ and 
\begin{center}
	$ g(b_{1},N)+\cdots+g(b_{s},N) > s(1+\epsilon)/2 $.
\end{center}	
\end{proposition}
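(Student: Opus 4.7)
The plan is to reduce Proposition \ref{pro 4.9} to Lemma \ref{lem 4.5} by replacing $g(\cdot, N)$ with a truncated-and-renormalized version $f$ defined on $Z(W)$.

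First, by Lemma \ref{lem 4.7},
\begin{equation*}
\mathbb{E}_{b \in Z(W)} g(b,N) \geq k\delta_A - (k-1) - \epsilon.
\end{equation*}
The density hypothesis $\delta_A > 1 - 1/(2k) + 2\epsilon/k$, multiplied through by $k$ and shifted by $(k-1) + \epsilon$, yields $k\delta_A - (k-1) - \epsilon > 1/2 + \epsilon$, so for $N$ sufficiently large one gets $\mathbb{E}_{b \in Z(W)} g(b,N) > 1/2 + \epsilon$.

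Next, I need a uniform upper envelope for $g(\cdot, N)$. Since $f_b \leq \nu_b$ pointwise and $\mathbb{E}_{n \in [N]} \nu_b(n) \sim 1$ (as shown in Section 3 via Chow's method), for any fixed $\eta \in (0, \epsilon)$ and all $N$ large enough we have $g(b, N) \leq 1 + \eta$ uniformly for $b \in Z(W)$. I then define
\begin{equation*}
f(b) := \frac{\max(g(b, N) - \epsilon/2,\, 0)}{1 + \eta} \qquad (b \in Z(W)).
\end{equation*}
Then $f \colon Z(W) \to [0, 1)$, and
\begin{equation*}
\mathbb{E}_{b \in Z(W)} f(b) \geq \frac{\mathbb{E}_{b \in Z(W)} g(b,N) - \epsilon/2}{1+\eta} > \frac{1/2 + \epsilon/2}{1 + \eta} > \frac{1}{2},
\end{equation*}
the last inequality being precisely the condition $\eta < \epsilon$.

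Now I apply Lemma \ref{lem 4.5} to $f$, which is permissible because $s \geq 16k\omega(k) + 4k + 4$. This produces $b_1, \ldots, b_s \in Z(W)$ with $n \equiv b_1 + \cdots + b_s \pmod{W}$, $f(b_i) > 0$ for every $i$, and $\sum_{i=1}^s f(b_i) > s/2$. The positivity $f(b_i) > 0$ is equivalent to $g(b_i, N) > \epsilon/2$. Multiplying the sum bound by $1+\eta$ and adding $s\epsilon/2$ gives
\begin{equation*}
\sum_{i=1}^s g(b_i, N) > \frac{s(1+\eta)}{2} + \frac{s\epsilon}{2} > \frac{s(1+\epsilon)}{2},
\end{equation*}
which is the desired conclusion.

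The only real delicacy is balancing the truncation: the lower cut at $\epsilon/2$ is what enforces $g(b_i, N) > \epsilon/2$, while the normalization by $1 + \eta$ is what places $f$ into $[0,1)$, and both operations erode the mean. The $2\epsilon/k$ slack in the density hypothesis translates, via Lemma \ref{lem 4.7}, into exactly $\epsilon$ of headroom above $1/2$, which is precisely what is needed to absorb the $\epsilon/2$ loss from the bottom truncation and the multiplicative $1/(1+\eta)$ loss from normalization, while still upgrading the final sum inequality from $s/2$ to $s(1+\epsilon)/2$. If this accounting is tightened, one should recover essentially the same statement from the same argument.
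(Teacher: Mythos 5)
Your proof is correct and follows essentially the same route as the paper: establish $\mathbb{E}_{b\in Z(W)} g(b,N) > 1/2 + \epsilon$ via Lemma \ref{lem 4.7}, truncate at $\epsilon/2$ and renormalize so that the resulting $f$ maps into $[0,1)$ with mean exceeding $1/2$, apply Lemma \ref{lem 4.5}, and undo the transformation. The only difference is cosmetic: the paper normalizes by $1+\epsilon$ (using $g(b,N)\leq 1+\epsilon$), while you introduce a fresh parameter $\eta\in(0,\epsilon)$ and normalize by $1+\eta$; both choices close the accounting, and your final inequality $\frac{s(1+\eta)}{2}+\frac{s\epsilon}{2}>\frac{s(1+\epsilon)}{2}$ is just the paper's with the roles of the constants slightly rearranged.
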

\begin{proof}
	Note that $ \delta_{A}>1-1/2k+2\epsilon/k $ is equivalent to $ k\delta_{A}-(k-1)-\epsilon>1/2+\epsilon $. Therefore, by Lemma \ref{lem 4.7}, we have \begin{center}
		$ \mathbb{E}_{b \in Z(W)}g(b,N) > 1/2+\epsilon $
	\end{center}
provided that $ N $ is large enough depending on $ \epsilon $.
For $ b \in Z(W) $, define
\begin{center}
$ f(b):=\max\bigg(0,\ \dfrac{1}{1+\epsilon}\big(g(b,N)-\epsilon/2\big)\bigg) $ . 
\end{center}
Recall that $ \mathbb{E}_{n \in [N]}\nu_{b}(n)\sim 1 $. We have $ g(b,N)\leq 1+\epsilon $ provided that $ N $ is large enough depending on $ \epsilon $. Therefore, $ f(b)  \in [0,1)$ . In addition,
\begin{center}
	$ \mathbb{E}_{b \in Z(W)}f(b)\geq \dfrac{1}{1+\epsilon}\mathbb{E}_{b \in Z(W)}\big(g(b,N)-\epsilon/2\big)>1/2 $.
\end{center}
Hence, by Lemma \ref{lem 4.5}, for all $ n \in \mathbb{Z}_{W} $ with $ n \equiv s \ (\bmod\ R_{k}) $, there exist $ b_{1},\ldots,b_{s}\in Z(W) $ such that $ n\equiv b_{1}+\cdots+b_{s} \ (\bmod \ W), \ f(b_{i})>0  $ for all $ i\in \{1,\ldots,s\} $ and
\begin{center}
	$ f(b_{1})+\cdots+f(b_{s})>s/2 . $
\end{center}
By definition of $ f $, we get that $ g(b_{i},N)>\epsilon/2 $ for all $ i \in \{1,\ldots,s\} $ and 
\begin{center}
	$ g(b_{1},N)+\cdots+g(b_{s},N)>\dfrac{s(1+\epsilon)}{2}+\dfrac{s\epsilon}{2}>\dfrac{s(1+\epsilon)}{2} $.
\end{center}
\end{proof}

\begin{proposition}\label{pro 4.10}
	Let $ \epsilon \in (0,1/6) $ and let $ N $ be sufficiently large depending on $ \epsilon $. Let $ \delta>1/2+3\epsilon $  and $ s\geq 16k\omega(k)+4k+4 $. Then, for all $ n \in \mathbb{Z}_{W} $ with $ n \equiv s \ (\bmod \ R_{k}) $, there exist $ b_{1},\ldots,b_{s} \in Z(W) $ such that $ n \equiv b_{1}+\cdots+b_{s}(\bmod \ W ), \ \mathbf{g}(b_{i},N) > \epsilon/2$ for all $ i \in \left\lbrace 1,\ldots,s\right\rbrace  $ and 
	\begin{center}
		$ \mathbf{g}(b_{1},N)+\cdots+\mathbf{g}(b_{s},N) > s(1+\epsilon)/2 $.
	\end{center}	
\end{proposition}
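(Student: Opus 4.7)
The plan is to mirror the proof of Proposition \ref{pro 4.9} with the obvious substitutions: replace $g(b,N)$ by $\mathbf{g}(b,N)$, use Lemma \ref{lem 4.8} in place of Lemma \ref{lem 4.7}, and use the bound $\delta > 1/2 + 3\epsilon$ to drive the mean above the threshold required by Lemma \ref{lem 4.5}. The whole argument is a transference from an $L^\infty$-unbounded density function on $Z(W)$ to a $[0,1)$-valued function whose mean exceeds $1/2$, after which the local Waring machinery of Lemma \ref{lem 4.5} produces the required representation.

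Concretely, I would first combine $\delta > 1/2 + 3\epsilon$ with Lemma \ref{lem 4.8} to conclude $\mathbb{E}_{b\in Z(W)}\mathbf{g}(b,N) \geq (1-\epsilon)\delta > 1/2 + \epsilon$, where the second inequality is the short algebraic check $(1-\epsilon)(1/2+3\epsilon) > 1/2+\epsilon$, equivalent to $3\epsilon/2 > 3\epsilon^{2}$, which holds throughout $(0,1/6)$. Next, define the transferred function
\begin{equation*}
f(b) := \max\!\left(0,\ \frac{1}{1+\epsilon}\bigl(\mathbf{g}(b,N)-\epsilon/2\bigr)\right),
\end{equation*}
and verify $f(b)\in[0,1)$: since $\mathbf{f}_{b}\leq \nu_{b}$ pointwise and $\mathbb{E}_{n\in[N]}\nu_{b}(n)\sim 1$, we have $\mathbf{g}(b,N)\leq 1+\epsilon$ for $N$ large. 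A direct averaging calculation then gives
\begin{equation*}
\mathbb{E}_{b\in Z(W)} f(b) \geq \frac{1}{1+\epsilon}\bigl(\mathbb{E}_{b\in Z(W)}\mathbf{g}(b,N) - \epsilon/2\bigr) > \frac{1}{1+\epsilon}\cdot\frac{1+\epsilon}{2} = \frac{1}{2}.
\end{equation*}

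With $f$ in hand, Lemma \ref{lem 4.5} (whose hypothesis $s \geq 16k\omega(k)+4k+4$ is exactly what is assumed) supplies, for each $n\in\mathbb{Z}_{W}$ with $n\equiv s\,(\bmod\,R_{k})$, representatives $b_{1},\dots,b_{s}\in Z(W)$ with $n\equiv b_{1}+\cdots+b_{s}\,(\bmod\,W)$, $f(b_{i})>0$ for every $i$, and $f(b_{1})+\cdots+f(b_{s}) > s/2$. Unwinding the definition of $f$, the positivity $f(b_{i})>0$ forces $\mathbf{g}(b_{i},N) > \epsilon/2$, and multiplying the sum condition by $1+\epsilon$ and adding back $s\epsilon/2$ yields
\begin{equation*}
\mathbf{g}(b_{1},N)+\cdots+\mathbf{g}(b_{s},N) > \frac{s(1+\epsilon)}{2} + \frac{s\epsilon}{2} > \frac{s(1+\epsilon)}{2},
\end{equation*}
which is the desired conclusion.

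I do not expect any serious obstacle: the only nontrivial point is the short calculation that $\delta > 1/2+3\epsilon$ is strong enough to absorb the factor $(1-\epsilon)$ coming from Lemma \ref{lem 4.8} and still leave a clean $\epsilon$-margin over $1/2$; the rest is bookkeeping parallel to Proposition \ref{pro 4.9}.
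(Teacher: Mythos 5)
Your proposal is correct and follows essentially the same route as the paper: combine Lemma \ref{lem 4.8} with $\delta > 1/2+3\epsilon$ to push $\mathbb{E}_{b\in Z(W)}\mathbf{g}(b,N)$ strictly above $1/2$ plus an $\epsilon$-margin, transfer to the truncated $[0,1)$-valued function $\mathbf{f}(b) = \max\bigl(0, \frac{1}{1+\epsilon}(\mathbf{g}(b,N)-\epsilon/2)\bigr)$, invoke Lemma \ref{lem 4.5}, and unwind. The only cosmetic difference is that the paper records the intermediate bound as $> 1/2+2\epsilon$ while you use the slightly weaker (but still sufficient) $> 1/2+\epsilon$; both algebraic checks are valid on $(0,1/6)$.
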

\begin{proof}
	Note that $ \delta>1/2+3\epsilon $ .
	Therefore, by Lemma \ref{lem 4.8}, we have \begin{center}
		$ \mathbb{E}_{b \in Z(W)}\mathbf{g}(b,N) > (1-\epsilon)( 1/2+3\epsilon)>1/2+2\epsilon $
	\end{center}
	provided that $ N $ is large enough depending on $ \epsilon $.
	For $ b \in Z(W) $, define
	\begin{center}
		$ \mathbf{f}(b):=\max\bigg(0,\ \dfrac{1}{1+\epsilon}\big(\mathbf{g}(b,N)-\epsilon/2\big)\bigg) $ . 
	\end{center}
	Recall that $ \mathbb{E}_{n \in [N]}\nu_{b}(n)\sim 1 $. We have $ \mathbf{g}(b,N)\leq 1+\epsilon $ provided that $ N $ is large enough depending on $ \epsilon $. Therefore, $ \mathbf{f}(b)  \in [0,1)$ . In addition,
	\begin{center}
		$ \mathbb{E}_{b \in Z(W)}\mathbf{f}(b)\geq \dfrac{1}{1+\epsilon}\mathbb{E}_{b \in Z(W)}\big(\mathbf{g}(b,N)-\epsilon/2\big)>1/2 $.
	\end{center}
	Hence, by Lemma \ref{lem 4.5}, for all $ n \in \mathbb{Z}_{W} $ with $ n \equiv s \ (\bmod\ R_{k}) $, there exist $ b_{1},\ldots,b_{s}\in Z(W) $ such that $ n\equiv b_{1}+\cdots+b_{s} \ (\bmod \ W), \ \mathbf{f}(b_{i})>0  $ for all $ i\in \{1,\ldots,s\} $ and
	\begin{center}
		$ \mathbf{f}(b_{1})+\cdots+\mathbf{f}(b_{s})>s/2 . $
	\end{center}
	By definition of $ \mathbf{f} $, we get that $ \mathbf{g}(b_{i},N)>\epsilon/2 $ for all $ i \in \{1,\ldots,s\} $ and 
	\begin{center}
		$ \mathbf{g}(b_{1},N)+\cdots+\mathbf{g}(b_{s},N)>\dfrac{s(1+\epsilon)}{2}+\dfrac{s\epsilon}{2}>\dfrac{s(1+\epsilon)}{2} $.
	\end{center}
\end{proof}

\section{Pseudorandomness}
In this section we will use the standard circle method machinery to prove that $ \nu_{b} $ is pseudorandom. In other words, we will prove the following result.
\begin{proposition}\label{pro 5.1}
	Let $ \alpha \in \mathbb{T} $ . For $ b\in [W] $ with $ b \in Z(W) $, we have
	\begin{center}
		$ \mid \widehat{\nu}_{b}(\alpha) - \widehat{1_{[N]}}(\alpha) \mid =o(N). $
	\end{center}
\end{proposition}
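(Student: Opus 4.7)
The plan is to apply the Hardy--Littlewood circle method to the exponential sum underlying $\widehat{\nu}_b(\alpha)$. Substituting $n = (p^k-b)/W$ and factoring out the constants reduces the problem to estimating
\[
T(\alpha) := \sum_{\substack{p \leq P \\ p^k \equiv b \,(\bmod W)}} p^{k-1}(\log p)\, e\!\left(\tfrac{(p^k - b)\alpha}{W}\right),
\]
where $P = (WN+b)^{1/k}$, and comparing $\tfrac{\varphi(W)k}{W\sigma(b)} T(\alpha)$ against $\widehat{1_{[N]}}(\alpha)$. By Dirichlet's approximation theorem I write $\alpha = a/q + \beta$ with $(a,q)=1$, $1 \leq q \leq Q$, $|\beta| \leq 1/(qQ)$ for a suitable parameter $Q$, splitting $\mathbb{T}$ into major and minor arcs.

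On the major arcs (small $q$ and small $|\beta|$), I stratify the inner sum by residue classes modulo $qW$ and apply Siegel--Walfisz to each class, then use partial summation against the weight $k p^{k-1}\log p$ to convert the prime sum into an integral of $e(\beta t^k)$. This yields an asymptotic of the form
\[
T(\alpha) \;\sim\; \frac{S(q,a,b)}{\varphi(qW)} \int_0^P k t^{k-1} e(\beta t^k)\,dt,
\]
where $S(q,a,b)$ is a Gauss-type sum over residues $z$ modulo $qW$ with $z^k \equiv b \,(\bmod W)$. The W-trick is crucial here: because $W$ contains $p^{2k}$ for every prime $p \leq w$, the normalized singular-series factor is forced to vanish unless $q=1$, and when $q=1$ the integral matches $\widehat{1_{[N]}}(\alpha)$ up to $o(N)$, giving the desired cancellation on major arcs.

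On the minor arcs, the plan is to combine a Weyl/Vinogradov-type bound $\sum_{p \leq P} (\log p)\, e(p^k \gamma) \ll P^{\,1 - 2^{1-k}+\varepsilon}$ (valid whenever $\gamma$ has no very good rational approximation) with partial summation against the $p^{k-1}$ weight, yielding $T(\alpha) = o(P^k) = o(WN)$ and hence $\widehat{\nu}_b(\alpha) = o(N)$. Simultaneously $\widehat{1_{[N]}}(\alpha) \ll \min(N, 1/\|\alpha\|) = o(N)$ holds trivially on the minor arcs. I expect the main obstacle to be the minor-arc estimate: the Weyl-type bound must be uniform in the congruence $p^k \equiv b \,(\bmod W)$ and must survive partial summation against the $p^{k-1}$ weight, forcing a careful tracking of the dependence on the slowly growing modulus $W$ and on the choice of $Q$. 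Once both arc estimates are in hand, they combine to give $|\widehat{\nu}_b(\alpha) - \widehat{1_{[N]}}(\alpha)| = o(N)$ uniformly in $\alpha \in \mathbb{T}$.
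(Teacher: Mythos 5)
Your major-arc sketch follows the paper's structure closely: stratify by residue classes modulo $qW$, apply Siegel--Walfisz, and use partial summation to produce $\dfrac{S(q,a,b)}{\varphi(qW)}I(\beta)$. The paper does exactly this (Lemma~\ref{lem 5.4}). One imprecision: you say the singular-series factor is "forced to vanish unless $q=1$." In the paper (Lemma~\ref{lem 5.6}) the Gauss-type sum vanishes exactly only in case (ii), namely $1\neq q\mid k$; for the other cases (i) $u\nmid k$ and (iii) $u\mid k$, $q>w$, the factor does not vanish but is merely small, giving $\widehat{\nu_b}(\alpha)\ll_\epsilon w^{\epsilon-1/2}N$, which is $o(N)$ only because $w\to\infty$. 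Your claim of exact vanishing is therefore too strong, but the qualitative conclusion survives.

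The genuine gap is in your minor-arc treatment. You cannot simultaneously (a) use Siegel--Walfisz on the major arcs and (b) use a Weyl/Vinogradov power-saving bound $\sum_{p\le P}(\log p)\,e(p^k\gamma)\ll P^{1-2^{1-k}+\varepsilon}$ on the minor arcs, because Siegel--Walfisz forces the major arcs to be narrow, $q\le(\log)^A$, and then the minor arcs contain $\alpha$ with rational approximations $a/q$ of denominator as small as $q\asymp(\log)^A$. For such $\gamma$ the Weyl-type bound provides no saving at all; the bound you cite requires $q$ to be at least a positive power of $P$. Conversely, widening the major arcs so that the Weyl bound applies on the complement would require information about primes in arithmetic progressions to moduli $q\gg(\log)^A$, which Siegel--Walfisz cannot deliver unconditionally. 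This is precisely why the paper invokes \cite[Theorem 10]{Hua2} on the minor arcs (proof of Lemma~\ref{lem 5.2}): this is a Vinogradov-style estimate for prime exponential sums, obtained via bilinear (Type I/II) decompositions, which yields a saving of an arbitrary power of $\log$ whenever $q>L^\sigma$. That log-power saving, combined with $\widehat{1_{[N]}}(\alpha)\ll NL^{-\sigma+1}$ on the minor arcs, gives $o(N)$. Without replacing your Weyl estimate by such a Vinogradov/Hua-type bound, your minor-arc argument does not close, and this is the central analytic input of the proof.
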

Our proof of Proposition \ref{pro 5.1} will follow that of Chow in \cite[Section 3]{Chow} and \cite[Section 4]{Chow}, only with some slight modifications. First, we introduce the Hardy and Littlewood decomposition. Let $ \sigma_{0} $ be a large positive constant, and let $ \sigma $ be a much larger positive constant. Let $ L=\log(WN+W)^{1/k} , \ P=L^{\sigma},\ Q=WN/L^{\sigma}$. For $ q \in \mathbb{N} $ and $ a \in \mathbb{Z} $, write $ \mathfrak{M}(q,a):=\{\alpha \in \mathbb{T}:|\alpha-a/q|\leq 1/Q\} $. Let 
\begin{center}
	$\begin{aligned}  \mathfrak{M}(q):=\bigcup\limits^{q-1}_{\substack{a=0  \\ (a,q)=1}}\mathfrak{M}(q,a)  \end{aligned}$
\end{center} 
and
\begin{center}
	$\begin{aligned}  \mathfrak{M}:=\bigcup\limits^{q-1}_{\substack{a=0  \\ (a,q)=1 \\ 1\leq q \leq P}}\mathfrak{M}(q,a)=\bigcup\limits_{1\leq q \leq P}\mathfrak{M}(q) \end{aligned} $.
\end{center} 
Put $ \mathfrak{m}=\mathbb{T}\setminus\mathfrak{M} $. We call $ \mathfrak{M} $ major arcs and $ \mathfrak{m} $ minor arcs.
For any $ b \in [W] $ with $ b \in Z(W) $, we have 
\begin{equation}\label{equ3}
\begin{aligned}
\widehat{\nu_{b}}(\alpha)
&=\dfrac{\varphi(W)}{W\sigma(b)}\sum\limits_{\substack{ W+b \leq p^{k}\leq  WN+b  \\ p^{k}\equiv b (\bmod W)}}kp^{k-1}(\log p)e\bigg(\alpha \dfrac{p^{k}-b}{W}\bigg)\\
&=\dfrac{\varphi(W)}{W\sigma(b)}\sum\limits_{\substack{ p^{k}\leq  WN+b  \\ p^{k}\equiv b (\bmod W)}}kp^{k-1}(\log p)e\bigg(\alpha \dfrac{p^{k}-b}{W}\bigg)+O(W)\\
&=\dfrac{\varphi(W)e(-\alpha b/W)}{W\sigma(b)}\sum\limits_{\substack{z \in [W] \\ z^{k}\equiv b (\bmod W)}}\sum\limits_{\substack{p\leq Y \\ p \equiv z (\bmod W)}}(kp^{k-1}\log p)e(\alpha p^{k}/W)+O(\log N),
\end{aligned}
\end{equation}
where $ Y=\lfloor (WN+b)^{1/k} \rfloor $. Therefore, we focus on the inner sum
\begin{center}
	$\begin{aligned} \sum\limits_{\substack{p\leq Y \\ p \equiv z (\bmod W)}}(kp^{k-1}\log p)e(\alpha p^{k}/W). \end{aligned}$
\end{center}
\subsection{Minor arcs}
In this subsection, we use \cite[Theorem 10]{Hua2} to prove Proposition \ref{pro 5.1} when $ \alpha \in \mathfrak{m} $.
\begin{lemma}\label{lem 5.2}
	If $ \alpha \in \mathfrak{m} $ then $ \widehat{\nu_{b}}(\alpha)\ll NL^{-\sigma_{0}} $.	
\end{lemma}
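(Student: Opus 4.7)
\emph{Proof proposal.} The natural starting point is the decomposition (\ref{equ3}), which expresses $\widehat{\nu_b}(\alpha)$, up to an error of $O(\log N)$, as a normalised combination of the inner exponential sums
\[ S(z;\alpha) := \sum_{\substack{p\leq Y\\ p\equiv z\,(\bmod\,W)}} kp^{k-1}(\log p)\, e(\alpha p^k/W), \qquad z\in[W],\ z^k\equiv b\,(\bmod\,W), \]
of which there are $\sigma(b)=\varphi(W)/|Z(W)|$ by Lemma~\ref{lem 4.6}. It therefore suffices to establish the uniform bound $|S(z;\alpha)| \ll Y^k L^{-A}$ in $z$, for any fixed $A>0$. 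Indeed, multiplying by the prefactor $\varphi(W)/(W\sigma(b))$ and summing over $z$ gives $\widehat{\nu_b}(\alpha)\ll \varphi(W)\, N\, L^{-A}$, and since $\varphi(W)\leq W=o(\log N)$, choosing $A=\sigma_0+1$ yields the claimed bound $\widehat{\nu_b}(\alpha)\ll NL^{-\sigma_0}$.

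My plan is to strip the polynomial weight $kp^{k-1}$ by Abel summation. Setting
\[ T_z(t) := \sum_{\substack{p\leq t\\ p\equiv z\,(\bmod\,W)}}(\log p)\, e(\alpha p^k/W), \]
partial summation writes $S(z;\alpha) = kY^{k-1}T_z(Y) - \int_2^Y k(k-1)t^{k-2}T_z(t)\,dt$, so a uniform bound $T_z(t)\ll t L^{-A}$ on a suitable range will propagate to $|S(z;\alpha)|\ll Y^k L^{-A}$, which is what we want because $Y^k = WN + O(W)$.

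The substance of the argument is the bound for $T_z(t)$. Writing $\beta=\alpha/W$, the sum $T_z(t)$ is a classical exponential sum over primes in an arithmetic progression modulo~$W$ with polynomial phase $\beta p^k$, which is exactly the object treated by Hua's \cite[Theorem 10]{Hua2}. That result gives $T_z(t)\ll t L^{-A}$ provided $\beta$ admits no rational approximation $a'/q'$ with $q'\leq L^{\sigma'}$ and $|\beta - a'/q'|\leq 1/(q't/L^{\sigma'})$, uniformly in the residue class. One then needs to verify that $\alpha\in\mathfrak{m}$ forces $\beta=\alpha/W$ into the analogous minor arcs adapted to scale $[Y]$: if $\beta$ had such a good approximation $a'/q'$, then $\alpha$ would be approximated by $Wa'/q'$, and after cancelling $d=(W,q')$ one obtains a reduced denominator $q'/d\leq q'\leq L^{\sigma'}$, contradicting $\alpha\in\mathfrak{m}$ provided $\sigma$ is chosen sufficiently large relative to $\sigma'$.

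The main technical obstacle I anticipate is precisely this calibration between the three exponents $\sigma_0,\sigma,\sigma'$, and keeping track of how divisors of $W$ interact with the Dirichlet denominator under the scaling $\beta\mapsto W\beta$, while maintaining uniformity over all $z\in Z(W)\cap[W]$ appearing in (\ref{equ3}). Once this calibration is in place, the only remaining input is that the character-sum or progression version of Hua's estimate is uniform in moduli of size $W=o(\log N)$, which is standard because the loss from reducing to individual residue classes is absorbed by the gain $L^{-A}$ for $A$ slightly larger than $\sigma_0$. Assembling Abel summation, Hua's estimate, Lemma~\ref{lem 4.6}, and the relation $Y^k\asymp WN$ gives the desired $\widehat{\nu_b}(\alpha)\ll NL^{-\sigma_0}$.
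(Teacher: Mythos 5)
Your proposal takes essentially the same route as the paper: start from the decomposition (\ref{equ3}), use partial summation to strip the polynomial weight and reduce to an exponential sum over primes in a progression modulo $W$, invoke Hua's \cite[Theorem 10]{Hua2} on the minor arcs, and absorb the stray factor of $W$ using $W=o(\log N)$, which is exactly how you choose $A=\sigma_0+1$.

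There is one organizational difference and one gap worth flagging. The paper applies Dirichlet's theorem first (with $Q=WN/L^{\sigma}$), getting $q>L^{\sigma}$ from $\alpha\in\mathfrak{m}$, and then puts \emph{only} the rational phase $e_{Wq}(ap^{k})$ into the counting function $A_{\diamond}(t)$ (no $\log p$), while the smooth factor $e(\beta t^{k}/W)kt^{k-1}\log t$ is absorbed into the partial-summation weight $f(t)$. Your $T_{z}(t)$ instead keeps $\log p$ and the full phase $e(\alpha p^{k}/W)$, stripping only $kp^{k-1}$; this works but means Hua's theorem is being applied to an irrational phase $\alpha/W$, so you still have to run the Dirichlet argument you sketch in your third paragraph, and the calibration of $\sigma,\sigma'$ against $\sigma_0$ (and the cancellation between $W$ and $q'$) must be carried out rather than gestured at, as it is what justifies the hypotheses of \cite[Theorem 10]{Hua2}. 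The genuine gap is the range of $t$: you integrate $\int_{2}^{Y}k(k-1)t^{k-2}T_{z}(t)\,dt$ as if $T_{z}(t)\ll tL^{-A}$ held uniformly, but Hua's estimate fails for $t$ too small compared with the denominator $q$. You need to truncate the integral, say at $YL^{-2\sigma_{0}}$, bound the short range trivially by $T_{z}(t)\ll t$, and check that this contributes $O(NL^{-\sigma_{0}})$ after the prefactor; this is precisely the step in the paper where the integral $\int_{1}^{Y}$ is replaced by $\int_{YL^{-2\sigma_{0}}}^{Y}$ at a cost of $O(NL^{-\sigma_{0}})$.
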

\begin{proof}
	If $ \alpha \in \mathfrak{m} $, by Dirichlet's approximation theorem \cite[Lemma 2.1]{Vau}, there exists a rational number $ a/q $ with $ (a,q)=1,\ 1\leq q \leq Q $ and $ |\alpha-a/q|\leq 1/(qQ) $. Because of $ \alpha \notin \mathfrak{M} $, we have $ q>L^{\sigma} $. Hence, let $ \beta=\alpha-a/q $, we have 
	\begin{center}
		$ |\beta|\leq \dfrac{1}{qQ}\leq \dfrac{1}{WN}. $
	\end{center}	
	Let 
	\begin{center}
		$\begin{aligned} A_{\diamond}(t)=\sum\limits_{\substack{p\leq t \\ p \equiv z (\bmod W)}}e_{Wq}(ap^{k}) \end{aligned} $
	\end{center}
	and 
	\begin{equation}\label{equ4}
	f(t)=e(\beta t^{k}/W)kt^{k-1}\log t.
	\end{equation}
	Trivially $ A_{\diamond}(t)\ll t $ . Note that $ f'(t)\ll Y^{k-2}\log Y  $. Using partial summation, we have 
	\begin{center}$
		\begin{aligned}
		\sum\limits_{\substack{p\leq Y \\ p \equiv z (\bmod W)}}(kp^{k-1}\log p)e(\alpha p^{k}/W)
		&= \sum\limits_{\substack{p\leq Y \\ p \equiv z (\bmod W)}}e(\beta p^{k}/W)(kp^{k-1}\log p)e_{Wq} (ap^{k})\\
		&=A_{\diamond}(Y)f(Y)-\int_{1}^{Y}A_{\diamond}(t)f'(t)dt\\
		&=A_{\diamond}(Y)f(Y)-\int_{YL^{-2\sigma_{0}}}^{Y}A_{\diamond}(t)f'(t)dt+O(NL^{-\sigma_{0}}).
		\end{aligned}$
	\end{center}
	In view of  (\ref{equ2})   and   (\ref{equ3})  , we have
	\begin{equation}\label{equ5}
	\widehat{\nu_{b}}(\alpha)\ll NL^{-\sigma_{0}}+Y^{k-1}L\sup_{YL^{-2\sigma_{0}}<t\leq Y}|A_{\diamond}(t)|.
	\end{equation}
	Next, we use \cite[Theorem 10]{Hua2} to estimate $ A_{\diamond}(t) $ when $ YL^{-2\sigma_{0}}<t\leq Y $. Clearly $ 0<W\leq \log (YL^{-2\sigma_{0}})$ provided that $ N $ is large enough . Let $  a'=a/(a,W) $ and $ W'=W/(a,W) $, we can get
	\begin{center}
		$ L^{\sigma}<q\leq W'q \leq L\dfrac{WN}{L^{\sigma}}=WNL^{-\sigma+1}\leq (YL^{-2\sigma_{0}})^{k}L^{-\sigma/2} $
	\end{center}
	provided that $ N $ is large enough. By \cite[Theorem 10]{Hua2}, with $ YL^{-2\sigma_{0}}<t\leq Y $ and $ L\asymp \log t $, we have 
	\begin{equation}\label{equ6}
	A_{\diamond}(t)\ll YL^{-\sigma_{0}-1}W^{-1}.
	\end{equation}
	Therefore, by $ (\ref{equ5}) $ and $ (\ref{equ6}) $, we have
	\begin{center}
		$ \widehat{\nu_{b}}(\alpha)\ll NL^{-\sigma_{0}}+Y^{k}L^{-\sigma_{0}}W^{-1}\ll NL^{-\sigma_{0}} $.
	\end{center}	
\end{proof}
\begin{lemma}\label{lem 5.3}
	Let $ \alpha \in \mathfrak{m} $. Then 
	\begin{center}
		$\widehat{\nu}_{b}(\alpha)-\widehat{1_{[N]}}(\alpha)\ll NL^{-\sigma_{0}} $.
	\end{center}
\end{lemma}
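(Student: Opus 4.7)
My plan is to deduce the bound by combining Lemma~\ref{lem 5.2} with an elementary estimate for the geometric sum $\widehat{1_{[N]}}(\alpha)$. By the triangle inequality,
\[
\bigl|\widehat{\nu}_{b}(\alpha) - \widehat{1_{[N]}}(\alpha)\bigr| \le \bigl|\widehat{\nu}_{b}(\alpha)\bigr| + \bigl|\widehat{1_{[N]}}(\alpha)\bigr|,
\]
and the first term on the right is already $\ll NL^{-\sigma_0}$ by Lemma~\ref{lem 5.2}. Hence the task reduces to showing that $|\widehat{1_{[N]}}(\alpha)| \ll NL^{-\sigma_0}$ for every $\alpha\in\mathfrak{m}$.

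The key observation I would use is that the pair $q=1,\ a=0$ is admissible in the definition of $\mathfrak{M}$ (since $1\le 1 \le P$ and $(0,1)=1$), so the neighbourhood $\mathfrak{M}(1,0) = \{\alpha\in\mathbb{T}:\|\alpha\|\le 1/Q\}$ is contained in $\mathfrak{M}$. Consequently, any $\alpha\in\mathfrak{m}$ satisfies $\|\alpha\|>1/Q$. I would then invoke the standard geometric-series bound
\[
\bigl|\widehat{1_{[N]}}(\alpha)\bigr| = \Bigl|\sum_{n=1}^{N}e(n\alpha)\Bigr| \le \frac{1}{2\|\alpha\|},
\]
which immediately yields $|\widehat{1_{[N]}}(\alpha)| < Q/2 = WN/(2L^{\sigma})$.

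To finish I would use $L\asymp \log N$ (clear from the definition $L=\log(WN+W)^{1/k}$) together with the fact $W=o(\log N)$ established in Section~3, giving $W\ll L$. Since $\sigma$ is fixed much larger than $\sigma_0$, in particular $\sigma - 1 \ge \sigma_0$, and therefore $WN/(2L^{\sigma})\ll NL^{-\sigma_0}$. Combining this with the bound from Lemma~\ref{lem 5.2} via the triangle inequality yields the claim.

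I do not expect a genuine obstacle here: once the major-arc decomposition of Section~5 is in place, this is essentially a bookkeeping step that transports the Lemma~\ref{lem 5.2} estimate across to the difference. The only point needing a moment of care is verifying that the parameter $\sigma$ was chosen large enough (relative to $\sigma_0$) to absorb the slowly growing factor $W$, but this is already guaranteed by the stipulation at the start of Section~5 that $\sigma$ is taken much larger than $\sigma_0$.
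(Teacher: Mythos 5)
Your proof is correct and delivers the same bound as the paper's. The overall strategy coincides—combine the Lemma~\ref{lem 5.2} bound for $|\widehat{\nu}_b(\alpha)|$ with a separate elementary bound for $|\widehat{1_{[N]}}(\alpha)|$ on $\mathfrak{m}$—but you reach the second bound by a slightly more direct route. The paper invokes Dirichlet's approximation theorem a second time to produce $a/q$ with $(a,q)=1$, $1\le q\le L^\sigma$, $|\alpha-a/q|\le 1/(qL^\sigma)$, and then deduces $|\alpha-a/q|>L^\sigma/(WN)$ from $\alpha\notin\mathfrak{M}$, finally estimating $\widehat{1_{[N]}}(\alpha)\ll\|\alpha\|^{-1}\le q/\|q\alpha\|\le WN/L^\sigma$. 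You instead observe directly that $\mathfrak{M}(1,0)=\{\alpha:\|\alpha\|\le 1/Q\}\subseteq\mathfrak{M}$, so $\alpha\in\mathfrak{m}$ forces $\|\alpha\|>1/Q$, and then apply the geometric-sum bound $|\widehat{1_{[N]}}(\alpha)|\le(2\|\alpha\|)^{-1}<Q/2=WN/(2L^\sigma)$. Since the exponential-sum estimate depends only on $\|\alpha\|$, specialising to the $q=1$ arc is all that is really needed, and your argument is cleaner in that it avoids the second appeal to Dirichlet. The closing step—using $W=o(L)$ (from $W=o(\log N)$ and $L\asymp\log N$) together with $\sigma-1\ge\sigma_0$—is exactly how the paper passes from $WN/L^\sigma$ to $NL^{-\sigma_0}$, so nothing is missing. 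Both versions buy the same final bound; yours is a modest simplification of the bookkeeping.
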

\begin{proof}
	Again using Dirichlet's approximation theorem \cite[Lemma 2.1]{Vau}, we have that there exists a rational number $ a/q $ with $ (a,q)=1,\ 1 \leq q \leq L^{\sigma} $ and $ |\alpha-a/q|\leq \dfrac{1}{qL^{\sigma}} $. As $ \alpha \notin \mathfrak{M} $, we must have $ |\alpha-a/q|> \dfrac{L^{\sigma}}{WN} $. Hence,
	\begin{center}
		$ \widehat{1_{[N]}}(\alpha)\ll ||\alpha||^{-1}\leq \dfrac{q}{||q\alpha||}\leq \dfrac{WN}{L^{\sigma}}\ll NL^{-\sigma+1} $.
	\end{center}
	Therefore, by Lemma \ref{lem 5.2}, we have
	\begin{center}
		$ \widehat{\nu}_{b}(\alpha)-\widehat{1_{[N]}}(\alpha)\ll NL^{-\sigma_{0}}+NL^{-\sigma+1}\ll NL^{-\sigma_{0}} $.
	\end{center}
\end{proof}
\subsection{Major arcs}
When $ (z, W)=1 $, let 
\begin{equation}\label{equ7}
S_{q}^{\ast}(a,z)=\sum\limits_{\substack{r=0 \\ (z+Wr,Wq)=1}}^{q-1}e_{q}\bigg(a\dfrac{(z+Wr)^{k}-b}{W}\bigg)
\end{equation}
and 
\begin{center}
	$ \begin{aligned}
	I(\beta)=\int_{0}^{N}e(\beta t)dt.
	\end{aligned}  $
\end{center}

First, we prove the following lemma.
\begin{lemma}\label{lem 5.4}
	Let $ \alpha \in \mathfrak{M}(q,a) $ with $ (a,q)=1 $	and $ q\leq L^{\sigma} $, and put
	\begin{center}
		$ \beta =\alpha-a/q \in [-L^{\sigma}/(WN),L^{\sigma}/(WN)]. $
	\end{center}
	Then
	\begin{center}
	$\widehat{\nu_{b}}(\alpha)=\dfrac{\varphi(W)}{\varphi(Wq)\sigma(b)}\sum\limits_{\substack{z \in [W] \\ z^{k}\equiv b (\bmod W)}}S_{q}^{\ast}(a,z)I(\beta)+O(Ne^{-C_{3}\sqrt{L}}) $,
	\end{center}
	where $ C_{3} $	is a positive constant depending only on $ \sigma $.
\end{lemma}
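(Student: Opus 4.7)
The proof will start from equation~(\ref{equ3}) and analyze, for each $z \in [W]$ with $z^{k} \equiv b \pmod W$, the inner sum
\[ T(z,\alpha) := \sum_{\substack{p \leq Y \\ p \equiv z \,(\bmod\, W)}} (kp^{k-1}\log p)\, e(\alpha p^{k}/W) \]
under the major-arc assumption $\alpha = a/q + \beta$ with $q \leq L^{\sigma}$ and $|\beta| \leq L^{\sigma}/(WN)$. First I would write $e(\alpha p^{k}/W) = e_{Wq}(ap^{k})\, e(\beta p^{k}/W)$ and refine the congruence $p \equiv z \pmod W$ to $p \equiv z + Wr \pmod{Wq}$ with $r \in \{0, \ldots, q-1\}$, so that $e_{Wq}(ap^{k}) = e_{Wq}(a(z+Wr)^{k})$ becomes a constant depending only on $r$. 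Residues $r$ with $(z+Wr,Wq) > 1$ contain at most one prime each, contributing a total of $\ll q Y^{k-1} \log Y$, which is negligible after applying the outer prefactor.

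For the coprime residues I would apply partial summation combined with the Siegel--Walfisz estimate
\[ \vartheta(t; Wq, z+Wr) = \frac{t}{\varphi(Wq)} + O\bigl(t\, e^{-c_{0} \sqrt{\log t}}\bigr), \]
which is valid uniformly since $Wq \ll L^{\sigma+1}$ is smaller than any power of $\log Y$. Bounding the derivative $\frac{d}{dt}[k t^{k-1} e(\beta t^{k}/W)]$ using $|\beta| \leq L^{\sigma}/(WN)$ shows the inner sum equals
\[ \frac{1}{\varphi(Wq)} \int_{0}^{Y} k t^{k-1} e(\beta t^{k}/W)\, dt + O\bigl(Y^{k} e^{-c_{1} \sqrt{L}}\bigr). \]
The substitution $u = t^{k}/W$ converts the main integral into $W \int_{0}^{Y^{k}/W} e(\beta u)\, du = W\, I(\beta) + O(W)$, since $Y^{k}/W = N + O(1)$.

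For the reassembly I would use the factorization $e_{Wq}(a(z+Wr)^{k}) = e_{Wq}(ab) \cdot e_{q}\bigl(a((z+Wr)^{k} - b)/W\bigr)$, which is legitimate because $(z+Wr)^{k} \equiv b \pmod W$, to produce the kernel of $S_{q}^{\ast}(a,z)$ from~(\ref{equ7}). The outer factor $e(-\alpha b/W) = e_{Wq}(-ab) \cdot e(-\beta b/W)$ from~(\ref{equ3}) cancels the $e_{Wq}(ab)$, while $e(-\beta b/W) = 1 + O(L^{\sigma}/N)$ is absorbed into the error. Summing over the $\sigma(b)$ admissible values of $z$, the $W$'s cancel and the stated main term $\frac{\varphi(W)}{\varphi(Wq)\sigma(b)} \sum_{z} S_{q}^{\ast}(a,z)\, I(\beta)$ emerges.

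The main obstacle is bookkeeping: one has to verify that every error contribution---the imprimitive residues $\ll qY^{k-1}\log Y$, the Siegel--Walfisz remainder amplified by the weight $Y^{k-1}\log Y$ through partial summation, the endpoint correction at $t=0$, and the $e(-\beta b/W)-1$ factor---when multiplied by the prefactor $\varphi(W)/(W\sigma(b))$ and summed over $z$ with $z^{k}\equiv b \pmod W$, still fits inside $O(N e^{-C_{3}\sqrt{L}})$. This works because $W$, $q$, and $|Z(W)|$ are all bounded by fixed powers of $L$, which are dominated by the Siegel--Walfisz savings $e^{c\sqrt{L}}$; no cancellation beyond what Siegel--Walfisz already provides is required.
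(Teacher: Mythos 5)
Your proposal is correct and takes essentially the same approach as the paper's proof: both refine the congruence $p\equiv z\ (\bmod\ W)$ to residues modulo $Wq$, invoke Siegel--Walfisz uniformly for moduli $Wq\ll L^{\sigma+1}$ together with Abel/partial summation, convert the resulting weighted sum into $W\,I(\beta)$ by the substitution $u=t^{k}/W$, and reassemble the complete exponential sum into $S_{q}^{\ast}(a,z)$ after cancelling the phase $e_{Wq}(\pm ab)$. The only cosmetic differences are that you use the Chebyshev function $\vartheta$ in place of the prime-counting function together with $\mathtt{Li}$, and you fold the twist $e_{Wq}(ab)$ directly into the kernel rather than first defining the intermediate sum $V_{q}(a,z)$ as the paper does.
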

\begin{proof}
	Fix $ b \in [W] $ with $  b \in Z(W) $.
	For $ n \in [Y] $, let 
	\begin{center}
		$\begin{aligned}
		S_{n}=\sum\limits_{\substack{p\leq n \\ p \equiv z (\bmod W)}}e_{Wq}(ap^{k}).\end{aligned} $
	\end{center}
	In addition,
	\begin{center}
		$ \begin{aligned}
		S_{n}&=\sum\limits_{\substack{p\leq n \\ p \equiv z (\bmod W) \\ (p,Wq)=1}}e_{Wq}(ap^{k})+O(Wq)\\
		&=\sum\limits_{\substack{r=0 \\ (z+Wr,Wq)=1}}^{q-1}e_{Wq}(a(z+Wr)^{k})\sum\limits_{\substack{p\leq n\\ p \equiv z+Wr (\bmod Wq)}}1+O(Wq).
		\end{aligned} $	
	\end{center}
	Let
	\begin{center}
		$\begin{aligned} V_{q}(a,z)=\sum\limits_{\substack{r=0 \\ (z+Wr,Wq)=1}}^{q-1}e_{Wq}(a(z+Wr)^{k}).\end{aligned} $
	\end{center}
	In view of $ Wq \leq L^{\sigma+1},\ (z+Wr,Wq)=1 $ and $ n \leq Y \leq (WN+W)^{1/k} $, by Siegel-Walfisz theorem \cite[Lemma 7.14]{Hua2}  , we have
	\begin{center}
		$\begin{aligned} \sum\limits_{\substack{p\leq n\\ p \equiv z+Wr (\bmod Wq)}}1 =\dfrac{\mathtt{Li}(n)}{\varphi(Wq)}+O\big((WN)^{1/k}e^{-C_{1}\sqrt{L}}\big),\end{aligned}$
	\end{center}
	where  $ C_{1} $ is a positive constant depending only on $ \sigma $ .
	Hence, we have 
	\begin{equation}\label{equ8}
	S_{n}=\dfrac{\mathtt{Li}(n)}{\varphi(Wq)}V_{q}(a,z)+O\big((WN)^{1/k}e^{-C_{2}\sqrt{L}}\big),
	\end{equation}
	where  $ C_{2} $ is a positive constant depending only on $ \sigma $ .
	Using Abel summation, we have 
	\begin{equation}\label{equ9}
	\begin{aligned}  \sum\limits_{\substack{p\leq Y \\ p \equiv z (\bmod W)}}(kp^{k-1}\log p)e(\alpha p^{k}/W)
	&=\sum\limits_{n=2}^{Y}(S_{n}-S_{n-1})f(n)\\
	&=S_{Y}f(Y+1)+\sum\limits_{n=2}^{Y}S_{n}(f(n)-f(n+1)) ,
	\end{aligned}  
	\end{equation}
	where $ f(t) $ has been defined in  (\ref{equ4}) .
	
	For $ n\leq Y $ and $ |\beta|\leq \dfrac{L^{\sigma}}{WN} $, we have
	\begin{equation}\label{equ10}
	f(n)-f(n+1)\ll Y^{k-2}L^{\sigma+1}.
	\end{equation}
	By (\ref{equ8}),\ (\ref{equ9}) and (\ref{equ10}), we have
	\begin{center}
		$ \begin{aligned}
		\sum\limits_{\substack{p\leq Y \\ p \equiv z (\bmod W)}}(kp^{k-1}\log p)e(\alpha p^{k}/W)&=\dfrac{V_{q}(a,z)}{\varphi(Wq)}\bigg[\mathtt{Li}(Y)f(Y+1)+\sum\limits_{n=2}^{Y}\mathtt{Li}(n)(f(n)\\
		&-f(n+1))  \bigg]
		+O(Ne^{-C_{3}\sqrt{L}}) ,
		\end{aligned}$
	\end{center}
	where  $ C_{3} $ is a positive constant depending only on $ \sigma $ .
	In view of $ \mathtt{Li}(2)=0 $, we have
	\begin{center}
		$\begin{aligned}
		\sum\limits_{\substack{p\leq Y \\ p \equiv z (\bmod W)}}(kp^{k-1}\log p)e(\alpha p^{k}/W)=\dfrac{V_{q}(a,z)}{\varphi(Wq)}\sum\limits_{n=3}^{Y}\int_{n-1}^{n}\dfrac{f(n)}{\log t}dt+O(Ne^{-C_{3}\sqrt{L}}) \end{aligned}$.
	\end{center}
	On the other hand,
	\begin{center}
		$ f(n)=f(t)+O(Y^{k-2}L^{\sigma+1}) $
	\end{center}
	for $ 2\leq n-1<t<n $.
	Therefore,
	\begin{center}
		$\begin{aligned}
		\sum\limits_{n=3}^{Y}\int_{n-1}^{n}\dfrac{f(n)}{\log t}dt&=\int_{2}^{Y}kt^{k-1}e(\beta t^{k}/W)dt+O(Y^{k-1}L^{\sigma+1})\\
		&=W\int_{2^{k}/W}^{Y^{k}/W}e(\beta t)dt+O(Y^{k-1}L^{\sigma+1}) \\
		&=WI(\beta)+O(Y^{k-1}L^{\sigma+1}).
		\end{aligned}$
	\end{center}
	It follows that 
	\begin{equation}\label{equ11}
	\sum\limits_{\substack{p\leq Y \\ p \equiv z (\bmod W)}}(kp^{k-1}\log p)e(\alpha p^{k}/W)=\dfrac{W}{\varphi(Wq)}V_{q}(a,z)I(\beta)+O(Ne^{-C_{3}\sqrt{L}}) .
	\end{equation}
	In view of (\ref{equ3}) and (\ref{equ11}), we have
	\begin{center}
		$\begin{aligned} \widehat{\nu_{b}}(\alpha)=\dfrac{\varphi(W)e(-\alpha b/W)}{\varphi(Wq)\sigma(b)}\sum\limits_{\substack{z \in [W]\\ z^{k}\equiv b (\bmod W)}}V_{q}(a,z)I(\beta)+O(Ne^{-C_{3}\sqrt{L}}). \end{aligned}$
	\end{center}
	Obviously,
	\begin{center}
		$ e_{Wq}(-ab)V_{q}(a,z)=S_{q}^{\ast}(a,z) $.
	\end{center}
	Therefore,
	\begin{center}
		$\begin{aligned} \widehat{\nu_{b}}(\alpha)=\dfrac{\varphi(W)e(-\beta b/W)}{\varphi(Wq)\sigma(b)}\sum\limits_{\substack{z \in [W]\\ z^{k}\equiv b (\bmod W)}}S_{q}^{\ast}(a,z)I(\beta)+O(Ne^{-C_{3}\sqrt{L}}) .\end{aligned}$
	\end{center}
	Since 
	\begin{center}
		$ \begin{aligned}
		e(-\beta b/W)I(\beta)=\int_{0}^{N}e(\beta(t-b/W))dt=I(\beta)+O(1), \end{aligned}$
	\end{center}
	Lemma \ref{lem 5.4} follows readily.
\end{proof}

Since we have established the approximation lemma for $ \widehat{\nu_{b}} $ when $ \alpha \in \mathfrak{M} $, now we are ready to finish the proof of Proposition \ref{pro 5.1} by tackling the major arcs case.

Next, we consider the case $ q=1 $ .
\begin{lemma}\label{lem 5.5}
	Let $ \alpha \in \mathfrak{M}(1)  $ . Then 
	\begin{center}
		$\widehat{\nu}_{b}(\alpha)-\widehat{1_{[N]}}(\alpha)\ll Ne^{-C_{3}\sqrt{L}} $.
	\end{center}
\end{lemma}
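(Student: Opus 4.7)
The plan is to specialize Lemma~\ref{lem 5.4} to the case $q=1$, $a=0$, and then compare the resulting main term with $\widehat{1_{[N]}}(\alpha)$. Since $\mathfrak{M}(1)=\mathfrak{M}(1,0)$, every $\alpha\in \mathfrak{M}(1)$ can be written as $\alpha=\beta$ with $|\beta|\leq L^{\sigma}/(WN)$, and both hypotheses of Lemma~\ref{lem 5.4} ($(a,q)=1$ and $q\leq L^{\sigma}$) hold trivially.

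First I would evaluate $S_1^{\ast}(0,z)$. With $q=1$ and $a=0$, the sum in $(\ref{equ7})$ collapses to the single term $r=0$; the coprimality constraint becomes $(z,W)=1$, and the argument of $e_q$ vanishes, so $S_1^{\ast}(0,z)=1$ when $(z,W)=1$ and $0$ otherwise. Because $b\in Z(W)$ forces $(b,W)=1$, any $z\in[W]$ with $z^{k}\equiv b\ (\bmod\ W)$ automatically satisfies $(z,W)=1$ (else any prime $p\mid (z,W)$ would divide $b$). Hence the inner sum in Lemma~\ref{lem 5.4} contains exactly $\sigma(b)$ terms, each equal to $1$, and the prefactor $\varphi(W)/\big(\varphi(W)\sigma(b)\big)$ telescopes everything, giving
$$\widehat{\nu_b}(\alpha)\;=\;I(\beta)+O\big(Ne^{-C_3\sqrt{L}}\big).$$

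Next I would compare $\widehat{1_{[N]}}(\alpha)$ with $I(\beta)$. A routine Riemann-sum comparison, using $|e(n\beta)-e(t\beta)|\ll|\beta|$ for $t\in[n-1,n]$, yields
$$\widehat{1_{[N]}}(\alpha)-I(\beta)\;=\;\sum_{n=1}^{N}e(n\beta)-\int_{0}^{N}e(\beta t)\,dt\;\ll\;1+N|\beta|\;\ll\;L^{\sigma},$$
since $|\beta|\leq L^{\sigma}/(WN)$. Because $L=\log(WN+W)^{1/k}\asymp\log N$, the bound $L^{\sigma}\ll Ne^{-C_3\sqrt{L}}$ follows, and combining the two displays finishes the proof.

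I do not anticipate any real obstacle here: the entire point of the $q=1$ major arc is that the complete exponential sum degenerates and the local factors collapse, so that $\widehat{\nu_b}$ matches the abelian integral $I(\beta)$, which itself differs from $\widehat{1_{[N]}}(\alpha)$ only by the harmless Riemann-sum discrepancy $O(L^{\sigma})$.
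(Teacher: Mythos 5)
Your proposal is correct and takes essentially the same route as the paper: specialize Lemma~\ref{lem 5.4} to $q=1$, $a=0$, observe that $S_1^{\ast}(0,z)=1$ for each of the $\sigma(b)$ roots $z$ (so the prefactor $\varphi(W)/(\varphi(W)\sigma(b))$ collapses the main term to $I(\alpha)$), and then compare $\widehat{1_{[N]}}(\alpha)$ with $I(\alpha)$ via a Riemann-sum/Euler--Maclaurin estimate giving an error $O(1+N\|\alpha\|)\ll L^{\sigma}$, which is absorbed into $Ne^{-C_3\sqrt{L}}$. You simply spell out the specialization of Lemma~\ref{lem 5.4} in more detail than the paper, which states the resulting identity directly.
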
	
\begin{proof}
	By (\ref{equ7}) and Lemma \ref{lem 5.4},\ we have 
	\begin{center}
		$ \widehat{\nu_{b}}(\alpha)=I(\alpha)+O(Ne^{-C_{3}\sqrt{L}}) $.
	\end{center}	
	Using the Euler-Maclaurin summation formula \cite[Eq.(4.8)]{Vau}, we have
	\begin{center}
		$\begin{aligned}
		\widehat{1_{[N]}}(\alpha)=\sum\limits_{n=1}^{N}e(\alpha n)=\int_{1}^{N}e(\alpha t)dt+O(1+N||\alpha||)=I(\alpha)+O(1+N||\alpha||) \end{aligned}$.
	\end{center}	
	Therefore, with $ ||\alpha||\leq L^{\sigma}/(WN) $, we have
	\begin{center}
		$ \widehat{\nu}_{b}(\alpha)-\widehat{1_{[N]}}(\alpha)\ll N||\alpha||+Ne^{-C_{3}\sqrt{L}}\ll Ne^{-C_{3}\sqrt{L}} $.
	\end{center}		
\end{proof}

Finally, let $ \alpha \in \mathfrak{M}(a,q) $ with $ 2\leq q\leq L^{\sigma} ,\ (a,q)=1 $ and let $ \beta =\alpha-a/q \in [-L^{\sigma}/(WN),\ L^{\sigma}/(WN)]. $

Let $ z \in [W] $ with $ (z,W)=1 $, we focus on the estimation of $ S_{q}^{\ast}(a,z) $. Let 
\begin{center}
	$\begin{aligned} S_{q}^{\diamond}(a,z)=\sum\limits_{\substack{r=0 \\ (z+Wr,Wq)=1}}^{q-1}e_{q}\bigg(a\sum\limits_{\ell=1}^{k}\binom{k}{\ell}W^{\ell-1}z^{k-\ell}r^{\ell}\bigg). \end{aligned}$
\end{center}
Clearly,
\begin{equation}\label{equ12}
S_{q}^{\ast}(a,z)=e_{Wq}\big(a(z^{k}-b)\big)S_{q}^{\diamond}(a,z).
\end{equation}
Since $ (z,W)=1 $, $ S_{q}^{\diamond}(a,z) $ has a slightly simpler form 
\begin{center}
	$\begin{aligned} S_{q}^{\diamond}(a,z)=\sum\limits_{\substack{r=0 \\ (z+Wr,q)=1}}^{q-1}e_{q}\bigg(a\sum\limits_{\ell=1}^{k}\binom{k}{\ell}W^{\ell-1}z^{k-\ell}r^{\ell}\bigg). \end{aligned}$
\end{center}
Let $ q=uv $, where $ u $ is $ w$-$smooth $ and $ (v,W)=1 $. Since $ (u,v)=1 $, there exist $ \bar{u},\ \bar{v}\in \mathbb{Z} $ such that $ u\bar{u}+v\bar{v}=1 $. Therefore, we have 
\begin{center}
	$\begin{aligned}
	S_{q}^{\diamond}(a,z)
	&=\sum\limits_{\substack{r=0 \\ (z+Wr,q)=1}}^{q-1}e_{q}\bigg(a\sum\limits_{\ell=1}^{k}\binom{k}{\ell}W^{\ell-1}z^{k-\ell}r^{\ell}\bigg)\\
	&=\sum\limits_{\substack{r_{1}=0 \\ (z+Wr_{1}v,u)=1}}^{u-1}\sum\limits_{\substack{r_{2}=0 \\ (z+Wr_{2}u,v)=1}}^{v-1}e_{uv}\bigg(a\big(u\bar{u}+v\bar{v} \big)\sum\limits_{\ell=1}^{k}\binom{k}{\ell}W^{\ell-1}z^{k-\ell} \times(r_{1}v+r_{2}u)^{\ell} \bigg)\\
	&=\sum\limits_{\substack{r_{1}=0 \\ (z+Wr_{1}v,u)=1}}^{u-1}\sum\limits_{\substack{r_{2}=0 \\ (z+Wr_{2}u,v)=1}}^{v-1}e_{uv}\bigg(a\big(u\bar{u}+v\bar{v} \big)\sum\limits_{\ell=1}^{k}\binom{k}{\ell}W^{\ell-1}z^{k-\ell}\times\big((r_{1}v)^{\ell}+(r_{2}u)^{\ell}\big)\bigg)\\
	\end{aligned}$
\end{center}
\newpage
\begin{center}
$\begin{aligned}
	&=\sum\limits_{\substack{r_{1}=0 \\ (z+Wr_{1}v,u)=1}}^{u-1}e_{u}\bigg(a\bar{v}\sum\limits_{\ell=1}^{k}\binom{k}{\ell}W^{\ell-1}z^{k-\ell}(r_{1}v)^{\ell}\bigg) \sum\limits_{\substack{r_{2}=0 \\ (z+Wr_{2}u,v)=1}}^{v-1}e_{v}\bigg(a\bar{u}\sum\limits_{\ell=1}^{k}\binom{k}{\ell}W^{\ell-1}z^{k-\ell}(r_{2}u)^{\ell}\bigg)\\
	&=\sum\limits_{\substack{r_{1}=0 \\ (z+Wr_{1},u)=1}}^{u-1}e_{u}\bigg(a_{1}\sum\limits_{\ell=1}^{k}\binom{k}{\ell}W^{\ell-1}z^{k-\ell}r_{1}^{\ell}\bigg)\sum\limits_{\substack{r_{2}=0 \\ (z+Wr_{2},v)=1}}^{v-1}e_{v}\bigg(a_{2}\sum\limits_{\ell=1}^{k}\binom{k}{\ell}W^{\ell-1}z^{k-\ell}r_{2}^{\ell}\bigg),
	\end{aligned}$
\end{center}
where $ a_{1}\equiv a\bar{v} \ (\bmod \ u)  $	and $ a_{2}\equiv a\bar{u} \ (\bmod \ v) $.
Therefore, we have
\begin{equation}\label{equ13}
S_{q}^{\diamond}(a,z)=S_{u}^{\diamond}(a_{1},z)S_{v}^{\diamond}(a_{2},z).
\end{equation}	
Similar to the argument of \cite[Section 4]{Chow}, we have
\begin{equation}\label{equ14}
S_{u}^{\diamond}(a_{1},z)=\begin{cases}\begin{aligned}
h\sum\limits_{r_{1}=0}^{u'-1}e_{u}\bigg(a_{1}\sum\limits_{\ell=1}^{k}\binom{k}{\ell}W^{\ell-1}z^{k-\ell}r_{1}^{\ell}\bigg) \quad if \  h\ |\ k, \end{aligned} \\
0 \qquad \qquad \qquad\qquad \quad  \ \ \ \ \ \ \ \ \qquad\qquad otherwise 
\end {cases}
\end{equation}
and 
\begin{center}
	$ S_{v}^{\diamond}(a_{2},z)\ll q^{{1/2}+\epsilon}, $
\end{center}
where $ h=(u,W) , u'=u/h $.

\begin{lemma}\label{lem 5.6}
	Let $ \alpha \in \mathfrak{M}(q) $ and $ 2\leq q\leq L^{\sigma} $.
	Then 
	\begin{center}
		$ \widehat{\nu_{b}}(\alpha)\ll_{\epsilon} w^{\epsilon-1/2}N $
	\end{center}
	for any $ \epsilon>0 $.
\end{lemma}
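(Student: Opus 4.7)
The plan is to substitute the expansion from Lemma \ref{lem 5.4} into $\widehat{\nu_b}(\alpha)$ and control the resulting exponential sum. Since the number of $z \in [W]$ with $z^k \equiv b \pmod{W}$ equals $\sigma(b)$ by definition (\ref{equ2}), the trivial estimates $|I(\beta)| \leq N$ and $Ne^{-C_3\sqrt{L}} = o(w^{\epsilon-1/2}N)$ reduce the problem to showing
$$\frac{\varphi(W)}{\varphi(Wq)}\, \max_z |S_q^\ast(a,z)| \ll_\epsilon w^{\epsilon-1/2}.$$
By (\ref{equ12}) the phase factor $e_{Wq}(a(z^k-b))$ is unimodular, so $|S_q^\ast(a,z)| = |S_q^\diamond(a,z)|$. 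Factoring $q=uv$ with $u$ being $w$-smooth and $(v,W)=1$, equation (\ref{equ13}) splits $S_q^\diamond = S_u^\diamond \cdot S_v^\diamond$, and a short Euler product calculation gives $\varphi(Wq) = u\,\varphi(W)\,\varphi(v)$: every prime dividing $u$ already divides $W$, so $\varphi(Wu) = u\varphi(W)$, while $(Wu,v)=1$. Hence $\varphi(W)/\varphi(Wq) = 1/(u\varphi(v))$ and the target reduces to
$$\frac{|S_u^\diamond(a_1,z)|\,|S_v^\diamond(a_2,z)|}{u\,\varphi(v)} \ll_\epsilon w^{\epsilon-1/2}.$$

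I would then split on the size of $v$. In the case $v \geq 2$, every prime factor of $v$ is strictly greater than $w$ (since $(v,W)=1$), so in particular $v > w$. Combining the bound $|S_v^\diamond(a_2,z)| \ll q^{1/2+\epsilon}$ recorded just before the lemma, the trivial $|S_u^\diamond(a_1,z)| \leq u$, and $\varphi(v) \gg v^{1-\epsilon}$, one obtains a saving of $v^{-1/2+O(\epsilon)} \leq w^{-1/2+O(\epsilon)}$, as required.

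The main obstacle is the complementary case $v=1$, in which $q=u \geq 2$ is entirely $w$-smooth. Here (\ref{equ14}) is decisive: if $h := (u,W) \nmid k$ then $S_u^\diamond = 0$ and the bound is trivial; otherwise $h \mid k$, which forces every prime divisor of $u$ (all of which are $\leq w$, hence divide $W$) to also divide $k$. In this remaining range I would follow Chow \cite[Section 4]{Chow} and apply a Weyl-type inequality to the polynomial exponential sum $T = \sum_{r_1=0}^{u'-1} e_u(P(r_1))$ appearing in (\ref{equ14}); combined with the explicit prefactor $h$ and the overall normalization $1/u$, this should deliver $|S_u^\diamond(a_1,z)|/u \ll_\epsilon w^{\epsilon-1/2}$. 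The adaptation is essentially cosmetic, Chow's estimate carrying over with the sole change that $W$ now contains $p^{2k}$ rather than $p^k$, a modification already accommodated by Lemma \ref{lem 4.3} replacing its analogue in the earlier proof of Lemma \ref{lem 4.4}.
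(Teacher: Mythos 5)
Your plan breaks at the very first reduction, where you pass from the $z$-sum in Lemma \ref{lem 5.4} to $\max_z |S_q^\ast(a,z)|$. The target inequality
$\frac{\varphi(W)}{\varphi(Wq)}\max_z |S_q^\ast(a,z)| \ll_\epsilon w^{\epsilon-1/2}$
is simply false when $1\neq q\mid k$ (a non-empty case, since $q\geq 2$ and $k\geq 2$). Indeed, if $q\mid k$ then $q\mid W$ for $N$ large, so $q=u=h$, $v=u'=1$; by (\ref{equ14}) the surviving sum is a single $r_1=0$ term and $S_q^\diamond(a,z)=h=q$ exactly, whence $|S_q^\ast(a,z)|=q$ for every admissible $z$. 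With $\varphi(Wq)=q\varphi(W)$, the left side equals $1$, not $o(1)$. Moreover, the Weyl-inequality rescue you propose for the $v=1$, $h\mid k$ subcase is vacuous: with $W=\prod_{p\le w}p^{2k}$, the condition $h\mid k$ already forces $v_p(u)\le\tau(k,p)$ for every $p\mid u$ (since $p^{2k}>k$), hence $u\mid k$, $u=h$, and $u'=1$ — there is no nontrivial $r_1$-sum to which Weyl could apply. This collapse is precisely a consequence of the paper's choice of exponent $2k$ in $W$.

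The paper handles this case by \emph{not} discarding the sum over $z$: it shows that
$\sum_{z\in[W],\,z^k\equiv b\,(\mathrm{mod}\,W)} S_q^\ast(a,z)=0$
when $1\neq q\mid k$. Concretely, using $S_q^\diamond(a,z)=q$ and (\ref{equ12}), it suffices to kill $\sum_z e_{Wq}(a(z^k-b))$. Writing $z=x+\frac{W}{k}y$ with $x\in[W/k]$, $y\in\{0,\dots,k-1\}$, the $p^{2k}$ powers in $W$ give $z^k\equiv x^k\ (\mathrm{mod}\ W)$, and the inner sum factors as $\sum_{y=0}^{k-1}e_q(ax^{k-1}y)=0$ because $(ax^{k-1},q)=1$ and $q\mid k$. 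The conclusion then follows from Lemma \ref{lem 5.4}. Your treatment of the $v\geq2$ case and the Euler-product identity $\varphi(W)/\varphi(Wq)=1/(u\varphi(v))$ are sound (though you also need $u\mid k$, hence $u$ bounded, to turn $q^{1/2+\epsilon}/\varphi(v)$ into a genuine $v^{-1/2+O(\epsilon)}$ saving; this follows since otherwise $S_u^\diamond=0$). But the $q\mid k$ case requires the $z$-averaging argument, not a pointwise bound.
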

\begin{proof}
	We split into three cases :(i)$ u\nmid k $, (ii)$ 1\neq q\ |\ k $, (iii)$ u\ |\ k $ and $ q>w $.
	
	The arguments of case (i) and case (iii) are the same as those of Chow in \cite[Section 4]{Chow}. Here we only verify case (ii). During the verification of case (ii), just like Chow, we also make use of the definition of $ W $. If we define $ W $ by $ W:=\prod_{1<p\leq w}p^{k} $, then we would be unable to verify case (ii) for some certain small values of $ k $. 
	
	Trivially $ k\ |\ W $ provided that $ N $ is large enough. Since $ 1\neq q\ |\ k $, we have $ q\ |\ k\ |\ W $. Therefore, $ q=u=h $ and $ v=u'=1 $. By (\ref{equ13}) and (\ref{equ14}), we have $ S_{q}^{\diamond}(a,z)=q $. It follows that, by (\ref{equ12}),
	\begin{center}
		$\begin{aligned} \sum\limits_{\substack{z \in [W]\\ z^{k}\equiv b (\bmod W)}}S_{q}^{\ast}(a,z)=q\sum\limits_{\substack{z \in [W]\\ z^{k}\equiv b (\bmod W)}}e_{Wq}(a(z^{k}-b)).\end{aligned} $
	\end{center} 
	Write
	\begin{center}
		$ z = x+\dfrac{W}{k}y $
	\end{center}
	with $ x \in [W/k] $ and $ y \in \{0,\ldots,k-1\} $. Then
	\begin{center}
		$\begin{aligned} z^{k}= \bigg(x+\dfrac{W}{k}y\bigg)^{k}= x^{k}+\sum\limits_{\ell=1}^{k}\binom{k}{\ell}x^{k-\ell}\bigg(\dfrac{W}{k}y\bigg)^{\ell} .\end{aligned}$
	\end{center}
In view of the definition of $ W $, we have  
	\begin{center}
		 $ z^{k}\equiv x^{k}\ (\bmod \ W) $ ,
	\end{center}
 Therefore, again using  the definition of $ W $, we have
	\begin{equation}\label{equ15}
		\begin{aligned} q^{-1}\sum\limits_{\substack{z \in [W]\\ z^{k}\equiv b (\bmod W)}}S_{q}^{\ast}(a,z)=\sum\limits_{\substack{x \in [W/k]\\ x^{k}\equiv b (\bmod W)}}e_{Wq}(a(x^{k}-b))\sum\limits_{y =0}^{k-1}e_{q}(ax^{k-1}y)  .\end{aligned}
	\end{equation} 
	Since $ (b,W)=1 $ and $ q\ |\ W $, we have $ (x,q)=1 $ when $ x^{k}\equiv b (\bmod\ W) $.
	Therefore, with $ (a,q)=1,\ q>1 $ and $ q\ |\ k $, we have 
	\begin{center}
		$\begin{aligned} \sum\limits_{y=0}^{k-1}e_{q}(ax^{k-1}y)=0. \end{aligned}$
	\end{center}
	Therefore, by (\ref{equ15}),
	\begin{center}
		$\begin{aligned} \sum\limits_{\substack{z \in [W]\\ z^{k}\equiv b (\bmod W)}}S_{q}^{\ast}(a,z)=0.\end{aligned} $
	\end{center}
	By Lemma \ref{lem 5.4}, we have
	\begin{center}
		$ \widehat{\nu_{b}}(\alpha)\ll Ne^{-C_{3}\sqrt{L}}\ll_{\epsilon} w^{\epsilon-1/2}N $.
	\end{center}
\end{proof}
Since $ \alpha \in \mathfrak{M}(a,q) $ with $ 2\leq q\leq L^{\sigma} ,(a,q)=1 $ and $ \beta =\alpha-a/q \in [-L^{\sigma}/(WN),\ L^{\sigma}/(WN)] $, we have
\begin{center}
	$ ||\alpha||=||a/q+\beta||\geq ||a/q||-||\beta||\geq q^{-1}-|\beta|\geq q^{-1}-L^{\sigma}/(WN)\gg L^{-\sigma} $.
\end{center} 
Therefore,
\begin{center}
	$ \widehat{1_{[N]}}(\alpha)\ll ||\alpha||^{-1}\ll L^{\sigma} $.
\end{center}
Coupling this with Lemma \ref{lem 5.6}, we have the following lemma.
\begin{lemma}\label{lem 5.7}
	Let $ \alpha \in \mathfrak{M}(q) $ with $ 2 \leq q \leq L^{\sigma} $ and let $ \epsilon >0 $ . Then
	\begin{center}
		$ \widehat{\nu}_{b}(\alpha)-\widehat{1_{[N]}}(\alpha)\ll_{\epsilon} w^{\epsilon-1/2}N $.
	\end{center}
\end{lemma}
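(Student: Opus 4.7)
The plan is to derive Lemma \ref{lem 5.7} as a direct consequence of Lemma \ref{lem 5.6} via the triangle inequality, with only a separate elementary bound on $\widehat{1_{[N]}}(\alpha)$ needed. The key observation is that on $\mathfrak{M}(q)$ with $q \geq 2$, the value $\alpha$ is bounded away from the integers, so the geometric-series form of $\widehat{1_{[N]}}$ is already small enough to be absorbed.

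First, I would fix $\alpha \in \mathfrak{M}(q,a)$ with $2 \leq q \leq L^{\sigma}$ and $(a,q)=1$, and write $\beta = \alpha - a/q$, which lies in $[-L^{\sigma}/(WN),\, L^{\sigma}/(WN)]$ by the definition of the major arcs. Since $a/q$ is a reduced fraction with denominator at least $2$, we have $\|a/q\| \geq 1/q$; and because $WN$ is far larger than $L^{2\sigma}$ (indeed $W = o(\log N)$, so $L^{\sigma}/(WN) = o(L^{-\sigma})$), the perturbation $|\beta|$ is negligible compared to $1/q$. Combining these gives
\[
\|\alpha\| \geq \|a/q\| - |\beta| \geq \frac{1}{q} - \frac{L^{\sigma}}{WN} \gg L^{-\sigma}.
\]
Applying the standard estimate $\widehat{1_{[N]}}(\alpha) \ll \|\alpha\|^{-1}$ then produces $\widehat{1_{[N]}}(\alpha) \ll L^{\sigma}$, a purely polylogarithmic-in-$N$ quantity.

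Second, I would invoke Lemma \ref{lem 5.6} on the same range $2 \leq q \leq L^{\sigma}$ to obtain $\widehat{\nu_{b}}(\alpha) \ll_{\epsilon} w^{\epsilon - 1/2} N$. Since $w = \log\log\log n_{0}$ tends to infinity, the right-hand side dominates $L^{\sigma}$: indeed $L^{\sigma}$ is a power of $\log N$, whereas $w^{\epsilon-1/2} N$ is linear in $N$ up to a slowly decaying factor. Consequently the triangle inequality yields
\[
\bigl|\widehat{\nu}_{b}(\alpha) - \widehat{1_{[N]}}(\alpha)\bigr| \leq \bigl|\widehat{\nu}_{b}(\alpha)\bigr| + \bigl|\widehat{1_{[N]}}(\alpha)\bigr| \ll_{\epsilon} w^{\epsilon-1/2} N + L^{\sigma} \ll_{\epsilon} w^{\epsilon-1/2} N.
\]
There is essentially no obstacle here: all the genuine work lies in Lemma \ref{lem 5.6} (the exponential-sum estimate that handles cases $u \nmid k$, $1 \neq q \mid k$, and $u \mid k$ with $q > w$). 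Lemma \ref{lem 5.7} is merely a packaging step that aligns the bound for $\widehat{\nu}_{b}$ on these arcs with the trivial bound for $\widehat{1_{[N]}}$, so that the pseudorandomness conclusion $|\widehat{\nu_{b}}(\alpha) - \widehat{1_{[N]}}(\alpha)| = o(N)$ follows uniformly on $\bigcup_{2 \leq q \leq L^{\sigma}} \mathfrak{M}(q)$.
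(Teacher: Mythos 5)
Your argument is correct and is essentially identical to the paper's own proof: the paper likewise bounds $\|\alpha\| \gg L^{-\sigma}$ on $\mathfrak{M}(q)$ for $q \geq 2$, deduces $\widehat{1_{[N]}}(\alpha) \ll L^{\sigma}$, and combines this with Lemma \ref{lem 5.6} via the triangle inequality, noting that $L^{\sigma}$ is negligible against $w^{\epsilon-1/2}N$.
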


\subsection{Conclusion}
Now we are ready to prove Proposition \ref{pro 5.1} combining the results from the previous subsections.

$ \mathit{Proof \ of \ Proposition \ 5.1.} $ By Lemma \ref{lem 5.3}, Lemma \ref{lem 5.5} and Lemma \ref{lem 5.7}, Proposition \ref{pro 5.1} follows readily. 
\qquad \qquad \qquad \qquad  \qquad \qquad \qquad \qquad \qquad \qquad \qquad \quad \ \ $ \qedsymbol $

\section{Restriction estimate}
In this section, we will establish the restriction estimate. 

Let $ t=k(k+1)/2 $. Fix $ b\in[W] $ with $ b \in Z(W) $. Let $ \phi:[N]\rightarrow \mathbb{R}_{\geq 0} $ with $ \phi \leq \nu_{b} $. Recall that $ L=\log(WN+W)^{1/k} $. Let $ \psi:=L^{-1}\phi $. Define $ \mu(n):[N]\rightarrow \mathbb{R}_{\geq 0} $ by 
 \begin{equation}\label{equ16}
 	 \mu(n):=\begin{cases}
 	\dfrac{1}{\sigma(b)}kx^{k-1} \qquad if \  Wn+b=x^{k}\in \mathbb{N}^{(k)},   \\
 	0  \qquad\qquad\qquad   otherwise. 
 	\end {cases} 
 \end{equation}
 
 Obviously, $ \psi(n)\leq \mu(n) $. Readers can rest assured that the Möbius function does not appear in this manuscript; therefore, $ \mu $ is always defined as above.
 
 First, we establish the following restriction estimate that has an additional $ N^{\epsilon} $ factor.
\begin{lemma}\label{lem 6.1}
	Let $ \epsilon >0 $. Then
	\begin{center}
		$\begin{aligned} \int_{\mathbb{T}}|\widehat{\psi}(\alpha)|^{2t}d\alpha\ll N^{2t-1+\epsilon} \end{aligned}$.
	\end{center}
\end{lemma}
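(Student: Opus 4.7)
My approach is to reduce the integral, via Parseval and the pointwise domination $\psi\leq\mu$, to an unweighted counting problem for sums of $k$-th powers, and then apply a mean value estimate for the Weyl exponential sum at the critical exponent $2t=k(k+1)$.

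By orthogonality,
\[
\int_{\mathbb{T}}|\widehat{\psi}(\alpha)|^{2t}\,d\alpha = \sum_{n_1+\cdots+n_t=n_1'+\cdots+n_t'}\prod_{i=1}^{t}\psi(n_i)\psi(n_i').
\]
On the support of $\mu$ I substitute $Wn_i+b=x_i^k$ and $Wn_i'+b=(x_i')^k$, which is a bijection there. This converts the linear equation $\sum n_i=\sum n_i'$ into $\sum x_i^k=\sum(x_i')^k$ (the $-b$ terms cancel) and converts each factor $\mu(n_i)$ into $kx_i^{k-1}/\sigma(b)\leq kY^{k-1}/\sigma(b)$, where $Y=\lfloor(WN+b)^{1/k}\rfloor$. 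Estimating every weight by its maximum, I arrive at
\[
\int_{\mathbb{T}}|\widehat{\psi}(\alpha)|^{2t}\,d\alpha\leq\frac{k^{2t}Y^{2t(k-1)}}{\sigma(b)^{2t}}\,K_t(Y),
\]
where $K_t(Y):=\#\{(x_i,x_i')\in[Y]^{2t}:\sum_{i=1}^{t}x_i^k=\sum_{i=1}^{t}(x_i')^k\}$.

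The analytic heart is the mean value estimate $K_t(Y)\ll Y^{2t-k+\epsilon}$ at the critical exponent $2t=k(k+1)$. Since $K_t(Y)=\int_0^1|f_k(\alpha)|^{2t}\,d\alpha$ for the Weyl sum $f_k(\alpha)=\sum_{x\leq Y}e(\alpha x^k)$, this follows for $k\leq 4$ directly from Hua's inequality (using $2t\geq 2^k$ and the trivial pointwise bound $|f_k|\leq Y$), and in general from Vinogradov's mean value theorem at the main-conjecture threshold, proved by Wooley's efficient congruencing or equivalently by Bourgain--Demeter--Guth $\ell^2$-decoupling for the moment curve. Plugging the bound in, and using $Y^k\asymp WN$, $W=o(\log N)$, and $\sigma(b)\geq 1$, I obtain
\[
\int_{\mathbb{T}}|\widehat{\psi}|^{2t}\,d\alpha\ll\frac{Y^{2tk-k+\epsilon}}{\sigma(b)^{2t}}\asymp(WN)^{2t-1}Y^{\epsilon}\ll N^{2t-1+\epsilon'},
\]
as claimed.

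The principal obstacle is securing the sharp mean value estimate at the critical exponent $2t=k(k+1)$; this is the only genuinely non-elementary input, and for $k\geq 5$ it really does require the Vinogradov/Wooley/BDG machinery. The crude weight bound $x_i^{k-1}\leq Y^{k-1}$ in the first step is wasteful, but the target exponent already carries an $N^\epsilon$ slack that comfortably absorbs both this wastefulness and the polylogarithmic factors in $W$, so no finer partial-summation treatment of the weights is needed here.
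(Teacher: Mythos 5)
Your approach is the same as the paper's (which just cites Salmensuu's proof of \cite[Lemma 8.1]{Sal}): apply Parseval, dominate $\psi$ by $\mu$, change variables to reduce to the count $K_t(Y)$ of solutions of $\sum_{i\le t}x_i^k=\sum_{i\le t}(x_i')^k$ in $[Y]^{2t}$, and invoke a mean value estimate at the exponent $2t=k(k+1)$. Your bookkeeping (the $b$'s cancelling, dropping the congruence condition $x_i^k\equiv b\ (\mathrm{mod}\ W)$ by positivity, absorbing $Y^{2t(k-1)}$, $W^{2t-1}$ and $\sigma(b)^{-2t}$ into the $N^\epsilon$ slack via $Y^k\asymp WN$ and $W=o(\log N)$) is all correct, and the Hua-plus-trivial-bound verification for $k\le 4$ checks out. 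One point deserves more care, though: the estimate $K_t(Y)\ll Y^{2t-k+\epsilon}$ at the critical exponent $2t=k(k+1)$ is \emph{not} literally equivalent to, nor an immediate slice of, the Bourgain--Demeter--Guth/Wooley resolution of the Vinogradov main conjecture, which concerns the full translation--dilation invariant system $J_{t,k}$ rather than the single equation; passing from $J_{t,k}(Y)\ll Y^{t+\epsilon}$ to the one-equation moment $\int_0^1|f_k|^{k(k+1)}\,d\alpha\ll Y^{k^2+\epsilon}$ requires a separate pruning/circle-method deduction. That deduction has indeed been carried out (this is precisely what makes Salmensuu's Lemma 8.1 work), so your proof is not wrong, but you should cite the single-equation corollary explicitly rather than describe it as ``equivalently'' following from $\ell^2$-decoupling.
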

\begin{proof}
	See \cite[proof of Lemma 8.1]{Sal}.
\end{proof}
\begin{lemma}\label{lem 6.2}
	Let $ s> k(k+1) $. Then, there exists $ q\in (s-1, s) $, such that
\begin{center}
	$ \begin{aligned} \int_{\mathbb{T}}|\widehat{\phi}(\alpha)|^{q}d\alpha\ll_{q} N^{q-1} \end{aligned} $.
\end{center}	
\end{lemma}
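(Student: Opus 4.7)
My plan is to interpolate between the \(L^{2t}\) restriction estimate of Lemma~\ref{lem 6.1} and a trivial pointwise bound on \(\widehat{\phi}\), then to upgrade the resulting estimate by averaging over \(q\in(s-1,s)\). First, since \(\psi=L^{-1}\phi\), Lemma~\ref{lem 6.1} rescales to
\[
\int_{\mathbb{T}}|\widehat{\phi}|^{2t}\,d\alpha \;=\; L^{2t}\!\int_{\mathbb{T}}|\widehat{\psi}|^{2t}\,d\alpha \;\ll_{\epsilon}\; L^{2t}\,N^{2t-1+\epsilon}\qquad(\epsilon>0),
\]
while \(\phi\ge 0\) and \(\phi\le\nu_{b}\) give the \(L^{\infty}\)-bound
\[
\|\widehat{\phi}\|_{\infty}\;\le\;\widehat{\phi}(0)\;=\;\sum_{n\in[N]}\phi(n)\;\le\;\sum_{n\in[N]}\nu_b(n)\;\ll\; N,
\]
using \(\mathbb{E}_{n\in[N]}\nu_b\sim 1\) from Section~3.

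Since \(s\in\mathbb{N}\) with \(s>k(k+1)=2t\), we have \(s-1\ge 2t\), so every \(q\in(s-1,s)\) satisfies \(q>2t\). H\"older's inequality then yields, for any such \(q\) and any \(\epsilon>0\),
\[
\int_{\mathbb{T}}|\widehat{\phi}|^{q}\,d\alpha \;\le\; \|\widehat{\phi}\|_{\infty}^{q-2t}\!\int_{\mathbb{T}}|\widehat{\phi}|^{2t}\,d\alpha \;\ll_{\epsilon}\; L^{2t}\,N^{q-1+\epsilon}.
\]
To eliminate the \(L^{2t}N^{\epsilon}\)-loss I would average this inequality over \(q\). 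Using \(|\widehat{\phi}|/N\le 1\) together with Fubini,
\[
\int_{s-1}^{s}\!N^{-(q-1)}\!\!\int_{\mathbb{T}}|\widehat{\phi}|^{q}\,d\alpha\,dq \;\le\; N^{2-s}\!\int_{\mathbb{T}}|\widehat{\phi}|^{s-1}\,d\alpha \;\ll_{\epsilon}\; L^{2t}\,N^{\epsilon},
\]
where I have used the interpolation bound \(\int|\widehat{\phi}|^{s-1}\ll_{\epsilon}L^{2t}N^{s-2+\epsilon}\) at the lower endpoint \(s-1\ge 2t\). A pigeonhole over \(q\in(s-1,s)\) then produces some \(q\) for which \(\int|\widehat{\phi}|^{q}\ll_{\epsilon}L^{2t}N^{q-1+\epsilon}\).

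The main obstacle is precisely the elimination of the remaining subpolynomial factor \(L^{2t}N^{\epsilon}\): naive pigeonhole only reproduces the H\"older bound, so discharging this loss requires a more delicate epsilon-removal. Following Salmensuu~\cite{Sal}, I would finish by a Bourgain-type level-set decomposition: for large levels \(\lambda\gtrsim N(\log N)^{-A}\) the set \(\{|\widehat{\phi}|>\lambda\}\) inherits enough arithmetic structure from the pseudorandomness of \(\nu_{b}\) (Proposition~\ref{pro 5.1}) to admit a sharper measure bound, while for smaller \(\lambda\) the \(L^{2t}\)-distribution-function bound suffices. The strict inequality \(q>2t\), guaranteed by the hypothesis \(s>k(k+1)\), supplies exactly the margin needed to balance these two regimes and absorb the \(L^{2t}N^{\epsilon}\)-factor into the constant \(C_{q}\), producing the claimed bound \(\int|\widehat{\phi}|^{q}d\alpha\ll_{q}N^{q-1}\) for some \(q\in(s-1,s)\).
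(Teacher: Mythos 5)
Your high-level plan is in the right ballpark: the paper's own "proof" is a citation to Chow's Lemma~5.1, which is indeed a Bourgain-type $\epsilon$-removal argument built on a restriction estimate with an $N^{\epsilon}$ loss (here, Lemma~\ref{lem 6.1}), and you correctly note that $s>k(k+1)=2t$ forces $q>2t$, giving the margin that the argument exploits. You are also right that the rescaling $\widehat{\phi}=L\widehat{\psi}$ and the trivial bound $\|\widehat{\phi}\|_{\infty}\le\widehat{\phi}(0)\ll N$ are the first steps, and right to discard your H\"older-plus-averaging detour, since, as you observe, pigeonhole there is circular.

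The genuine gap is in the key step you sketch at the end. You attribute the improved large-level bound to ``the pseudorandomness of $\nu_{b}$ (Proposition~\ref{pro 5.1})'', but this is not the correct input and would not close the argument. Proposition~\ref{pro 5.1} controls $\widehat{\nu_{b}}$, and the domination $\phi\le\nu_{b}$ gives no pointwise control of $\widehat{\phi}$ by $\widehat{\nu_{b}}$; a nonnegative minorant of a pseudorandom function can have a large Fourier coefficient at essentially any frequency. In Chow's Lemma~5.1 (and in Salmensuu's Lemma~8.2, which our Lemma~\ref{lem 6.2} is modelled on), the large-spectrum control instead comes from the restriction estimate (our Lemma~\ref{lem 6.1}) applied not merely to $\psi$ but to randomly-signed or selected modifications $\sum_{i}\epsilon_{i}\psi\,1_{B_i}$ — Bourgain's dispersion argument — which bounds the number of $1/N$-separated frequencies where $|\widehat{\psi}|$ exceeds $\lambda$ by a power of $N/\lambda$ strictly better than what Chebyshev gives. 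This step uses that Lemma~\ref{lem 6.1} holds uniformly over all $\psi\le\mu$ (with $\mu$ as in (\ref{equ16})), not for the single function at hand, and it is precisely here that the hypothesis $q>2t$ is spent. Until this dispersion/large-spectrum mechanism is supplied, the $L^{2t}N^{\epsilon}$ loss has not actually been removed, and your proposal stops short of a proof. You should also note, as the paper does, that our $\mu$ in (\ref{equ16}) differs slightly from Chow's, so the verification that $\psi\le\mu$ and that Lemma~\ref{lem 6.1} applies needs the small check carried out in Section~6.
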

\begin{proof}
The proof of this lemma is omitted, since with minor changes the proof of \cite[Lemma 5.1]{Chow} also works for our lemma using Lemma \ref{lem 6.1} in place of \cite[Lemma 5.3]{Chow}. Another point of distinction is that Chow \cite[Section 5]{Chow} defines $ \mu(n) $ by 
\begin{center}
	$\begin{aligned} \mu(n)=\dfrac{1}{\sigma(b)}\sum\limits_{\substack{x\in [X]\\ Wn-b=x^{k}}}kx^{k-1} \end{aligned}, $
\end{center} while we define $ \mu(n):[N]\rightarrow \mathbb{R}_{\geq 0} $ by (\ref{equ16}).	
\end{proof}
Taking $ \phi=f_{b} $ and $ \phi=\mathbf{f}_{b} $ respectively, we have the following two results.	
\begin{proposition}\label{pro 6.3}
Let $ s>k(k+1) $. For $ b\in [W] $ with $ b\in Z(W) $, there exists $ q\in (s-1,s) $ such that \begin{center}
	$ \| \widehat{f}_{b} \|_{q} \ll_{q} N^{1-1/q} $.
\end{center}	
\end{proposition}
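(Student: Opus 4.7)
The plan is to deduce Proposition \ref{pro 6.3} as an immediate corollary of Lemma \ref{lem 6.2}, since all the substantive harmonic-analytic work has already been packaged there. The setup at the start of Section 6 fixes $\phi:[N]\to\mathbb{R}_{\geq 0}$ with the single hypothesis $\phi\leq \nu_b$, and Lemma \ref{lem 6.2} asserts the existence of $q\in(s-1,s)$ such that $\int_{\mathbb{T}}|\widehat{\phi}(\alpha)|^{q}d\alpha\ll_{q} N^{q-1}$ under the assumption $s>k(k+1)$. The first step is therefore to verify that $f_b$ is an admissible choice of $\phi$: this was already observed in Section 3, where the inequality $f_b(n)\leq \nu_b(n)$ for all $n\in[N]$ follows directly from the definitions (the nonzero values of $f_b$ and $\nu_b$ coincide, but $f_b$ is supported on the subset of primes lying in $A\subseteq\mathbb{P}$).

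With admissibility confirmed, the second step is to apply Lemma \ref{lem 6.2} with $\phi=f_b$, producing an exponent $q\in(s-1,s)$ such that
\begin{equation*}
\int_{\mathbb{T}}|\widehat{f_b}(\alpha)|^{q}\,d\alpha \ll_q N^{q-1}.
\end{equation*}
Taking the $q$-th root of both sides and recalling the definition of the $L^q$-norm gives
\begin{equation*}
\|\widehat{f_b}\|_q = \bigg(\int_{\mathbb{T}}|\widehat{f_b}(\alpha)|^{q}\,d\alpha\bigg)^{1/q} \ll_q N^{(q-1)/q} = N^{1-1/q},
\end{equation*}
which is the desired conclusion.

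There is essentially no obstacle here, since Proposition \ref{pro 6.3} is a specialization rather than a genuine extension. The real content lies upstream in Lemma \ref{lem 6.1} (the enlarged-$L^{2t}$ bound with an $N^\epsilon$ loss, quoted from Salmensuu) and in Lemma \ref{lem 6.2} (Chow's epsilon-removal procedure adapted to the present $\mu$), and the pointwise majorization $f_b\leq\nu_b$ carries all of this directly down to $f_b$. The only thing one might pause over is that the constant implied by $\ll_q$ does not depend on $b$; this is automatic because the estimates in Lemmas \ref{lem 6.1} and \ref{lem 6.2} are proved uniformly in the admissible majorant $\phi$, using only the pseudorandomness of $\nu_b$ established in Proposition \ref{pro 5.1}.
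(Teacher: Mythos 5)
Your proof is correct and matches the paper exactly: the paper states Propositions \ref{pro 6.3} and \ref{pro 6.4} immediately after Lemma \ref{lem 6.2} with the one-line remark that one takes $\phi=f_b$ (resp.\ $\phi=\mathbf{f}_b$), relying on the pointwise bound $f_b\leq\nu_b$ recorded in Section~3. Your additional observation about uniformity in $b$ is a fair point but not something the paper addresses explicitly either.
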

\begin{proposition}\label{pro 6.4}
	Let $ s>k(k+1) $. For $ b\in [W] $ with $ b\in Z(W) $, there exists $ q\in (s-1,s) $ such that \begin{center}
		$ \| \widehat{\mathbf{f}}_{b} \|_{q} \ll_{q} N^{1-1/q} $.
	\end{center}	
\end{proposition}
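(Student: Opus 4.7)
The plan is to deduce Proposition \ref{pro 6.4} directly from Lemma \ref{lem 6.2}, exactly parallel to how Proposition \ref{pro 6.3} follows. The setup of Section 6 was engineered precisely so that Lemma \ref{lem 6.2} applies to any nonnegative $\phi:[N]\to\mathbb{R}_{\geq 0}$ satisfying the pointwise majorization $\phi\leq \nu_b$. Since we already observed in Section 3 that $\mathbf{f}_b(n)\leq \nu_b(n)$ for every $n\in[N]$ (both are supported on inputs with $Wn+b=p^k$, $\mathbf{f}_b$ further restricts to $p\in\mathcal{P}\subseteq\mathbb{P}$, and the normalizing factor is identical), the function $\mathbf{f}_b$ is an admissible choice of $\phi$.

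First I would verify the hypothesis: the condition $s>k(k+1)$ is identical to the hypothesis of Lemma \ref{lem 6.2}, and $\mathbf{f}_b\leq\nu_b$ has already been stated. Then, applying Lemma \ref{lem 6.2} with $\phi=\mathbf{f}_b$ produces an exponent $q\in(s-1,s)$ such that
\begin{equation*}
\int_{\mathbb{T}}|\widehat{\mathbf{f}}_b(\alpha)|^{q}\,d\alpha \ll_{q} N^{q-1}.
\end{equation*}
Taking $q$-th roots on both sides converts this to the $L^q$-norm bound
\begin{equation*}
\|\widehat{\mathbf{f}}_b\|_{q} = \Bigl(\int_{\mathbb{T}}|\widehat{\mathbf{f}}_b(\alpha)|^{q}\,d\alpha\Bigr)^{1/q} \ll_{q} N^{(q-1)/q}=N^{1-1/q},
\end{equation*}
which is exactly the stated conclusion.

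There is really no obstacle here; the entire content of the proposition sits inside Lemma \ref{lem 6.2}. The only thing one might want to double-check is that the suppressed pseudorandomness input used by Lemma \ref{lem 6.1} (and hence by Lemma \ref{lem 6.2}) applies uniformly in $b\in Z(W)$, but Proposition \ref{pro 5.1} is formulated precisely for such $b$, so no additional work is required. Consequently the proof is a one-line application of Lemma \ref{lem 6.2}, mirroring verbatim the argument of Proposition \ref{pro 6.3} with $\mathcal{P}$ in place of $A$.
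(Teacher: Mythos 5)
Your proof is correct and is essentially the paper's own argument: the paper introduces Propositions \ref{pro 6.3} and \ref{pro 6.4} with the single line ``Taking $\phi=f_b$ and $\phi=\mathbf{f}_b$ respectively,'' i.e.\ exactly the specialization of Lemma \ref{lem 6.2} that you carry out. Your extra step of taking $q$-th roots to pass from the integral bound to the $\|\cdot\|_q$ bound is the intended (and trivial) bookkeeping.
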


\section{Transference Principle and Proof of Theorem 1.1-1.2}
In this section we will use Salmensuu's transference lemma to prove our main theorem. In \cite{Sal}, Salmensuu applied the transference principle to prove a transference lemma (\cite[Proposition 3.9]{Sal}) and used the transference lemma to investigate when a positive density subset of $ k $th powers forms an asymptotic additive basis. The main idea of Salmensuu is to transfer an additive combinatorial result from the integers to a sparse subset of the integers.

First, we introduce some definitions.
\begin{definition}
	Let $ \eta > 0 $ and $ N \in \mathbb{N} $. We say that a function $ f :[N] \rightarrow \mathbb{R}_{\geq 0} $ is $ \eta$-$pseudorandom  $ if there exists a majorant function $ \nu_{f} $ such that $ f\leq \nu_{f} $ pointwise and $ ||\widehat{\nu_{f}}-\widehat{1_{[N]}}||_{\infty} \leq \eta N $.	
\end{definition}
\begin{definition}
	Let $ q > 1,\ N \in \mathbb{N}$ and $ K \geq 1 $. We say that a function $ f:[N] \rightarrow \mathbb{R}_{\geq 0} $ is  q-restricted  with  constant  K  if $ ||\widehat{f}||_{q} \leq KN^{1-1/q} $.
\end{definition}

Salmensuu's transference lemma is the following.
\begin{proposition}(\cite[Proposition 3.9]{Sal})\label{pro 7.3}
	Let $ s\geq 2,\ s-1<q<s,\ K \geq 1 $ and $ \epsilon, \eta \in (0,1). $ Let $ N \in \mathbb{N} $  and,\ for each  i $\in \{1,\ldots,s\}$ let $ f_{i}:[N]\rightarrow \mathbb{R}_{\geq 0} $ be  $ \eta$-$pseudorandom $ and $ q$-$restricted \ with \ constant \ K$. Assume also that 
	\begin{center}
	$ \mathbb{E}_{n \in [N]}f_{1}(n) +\cdots+f_{s}(n)>s(1+\epsilon)/2 $	
	\end{center}
	and 
	\begin{center}
	$ \mathbb{E}_{n \in [N]}f_{i}(n)>\epsilon/2 $
	\end{center}
	for all $ i \in \{1,\ldots,s\}$. Write $ \kappa :=\epsilon/32$. Assume that $ \eta $ is sufficiently small depending on $ \epsilon,\ K,\ q $ and $ s $. Then, for all $ n \in \bigg((1-\kappa^{2})\dfrac{sN}{2},\ (1+\kappa)\dfrac{sN}{2}\bigg) $, 
	\begin{center}
		$ f_{1}\ast\cdots\ast f_{s}(n)\geq c(\epsilon,s)N^{s-1}, $
	\end{center}
	where $ c(\epsilon,s) > 0 $  depends only on $ \epsilon $ and $ s $.
\end{proposition}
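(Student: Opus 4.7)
The plan is to follow the standard transference principle architecture: approximate each $f_i$ by a bounded ``dense model'' using the pseudorandomness hypothesis, prove the convolution lower bound for the bounded models by an elementary counting argument, and then control the Fourier-analytic error between the $f$-convolution and the $g$-convolution using the restriction estimate.

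First I would invoke a dense model theorem (Green-Tao-Ziegler style, built from a Hahn-Banach or minimax argument) to produce, for each $i \in \{1, \ldots, s\}$, a function $g_i : [N] \to [0,1]$ such that $\mathbb{E}_{n \in [N]} g_i(n) = \mathbb{E}_{n \in [N]} f_i(n) + o_\eta(1)$ and $\|\widehat{f_i} - \widehat{g_i}\|_\infty \leq \eta_1 N$, where $\eta_1 = \eta_1(\eta) \to 0$ as $\eta \to 0$. The construction relies on the existence of the pseudorandom majorant $\nu_{f_i}$ together with the fact that $\widehat{\nu_{f_i}}$ is close to $\widehat{1_{[N]}}$ in $L^\infty$. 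The bounded models $g_i$ inherit the density conditions $\mathbb{E} g_i > \epsilon/2 - o(1)$ and $\sum_i \mathbb{E} g_i > s(1+\epsilon)/2 - o(1)$.

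Next I would solve the bounded analogue. For $g_i : [N] \to [0,1]$ satisfying the above density conditions, a routine counting argument shows $g_1 \ast \cdots \ast g_s(n) \geq c(\epsilon, s) N^{s-1}$ for every $n \in ((1 - \kappa^2) sN/2, (1 + \kappa) sN/2)$ with $\kappa = \epsilon/32$. The simplex $\{(n_1, \ldots, n_s) \in [N]^s : \sum_i n_i = n\}$ has size $\asymp N^{s-1}$ for such $n$, and the averaged density bound $\sum_i \mathbb{E} g_i > s(1+\epsilon)/2$ forces, on a positive proportion of tuples, $\sum_i g_i(n_i) > s/2$; combined with the individual lower bound $\mathbb{E} g_i > \epsilon/2$, this yields $\prod_i g_i(n_i) \geq c(\epsilon,s)$ on a substantial subset.

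For the error control, I would use the telescoping identity
\begin{equation*}
f_1 \ast \cdots \ast f_s - g_1 \ast \cdots \ast g_s = \sum_{i=1}^{s} g_1 \ast \cdots \ast g_{i-1} \ast (f_i - g_i) \ast f_{i+1} \ast \cdots \ast f_s,
\end{equation*}
and estimate each summand by Fourier inversion, placing $\widehat{f_i - g_i}$ into $L^\infty$ (using the dense model bound $\eta_1 N$) and distributing the remaining $s-1$ Fourier factors into $L^p$ norms calibrated to the restriction estimate. Since $s - 1 < q < s$, the bound $\|\widehat{f_j}\|_q \leq K N^{1 - 1/q}$, together with an analogous restriction bound for $g_j$ following from $g_j \in [0,1]$ and interpolation with Parseval, combines with H\"older's inequality to yield a total error of order $\eta_1^{\theta} N^{s-1}$ for some $\theta = \theta(q, s) > 0$. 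The main obstacle will be the dense model construction itself together with the quantitative balancing: one must choose $\eta$ small enough that $\eta_1^\theta$ beats the constants coming from the restriction estimate and the binomial blow-up in $s$, and this is precisely what forces the $\eta$ threshold to depend on $\epsilon, K, q,$ and $s$ as in the hypothesis.
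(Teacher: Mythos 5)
Your overall architecture (dense model, bounded analogue, telescoping with restriction estimates) is the standard transference-principle scheme and is indeed the route that Salmensuu's Proposition 3.9 follows, so the plan is sound at the level of strategy. The telescoping error bound is also essentially right: you correctly note that $s-1 < q < s$ forces a fractional H\"older split, which is why a power $\eta_1^{\theta}$ with $\theta = \theta(q,s) \in (0,1)$ appears rather than $\eta_1$ itself. And while invoking a Green--Tao--Ziegler dense model theorem is heavier machinery than the situation demands (the Fourier-analytic versions in Green, Li--Pan, Shao, and Salmensuu just restrict $\widehat{f_i}$ to the large spectrum and control $L^\infty$ boundedness of the inverse transform directly from pseudorandomness of $\nu_{f_i}$), it is not wrong.

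The genuine gap is in your treatment of the bounded analogue. You argue that since the average of $\sum_i g_i(n_i)$ over the simplex exceeds $s(1+\epsilon)/2$, a positive proportion of tuples $(n_1,\ldots,n_s)$ have $\sum_i g_i(n_i) > s/2$, and that this, together with $\mathbb{E} g_i > \epsilon/2$, yields $\prod_i g_i(n_i) \geq c(\epsilon,s)$ on a substantial subset. The last implication does not hold: knowing $\sum_i g_i(n_i) > s/2$ at a given tuple tells you nothing about a lower bound on the product, since some coordinates can satisfy $g_i(n_i) = 0$ while others compensate. (For $s=2$, $g_1(n_1)=1$, $g_2(n_2)=0.01$ gives a sum $>1$ but a tiny product.) The global condition $\mathbb{E} g_i > \epsilon/2$ is an average and does not control $g_i$ at the specific coordinate $n_i$ that appears in a particular simplex tuple, so it cannot rescue the pointwise product bound. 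The correct argument (as in Salmensuu and in Shao's and Li--Pan's precursors) proceeds differently: one uses the pigeonhole $\frac{2}{s}\sum_i \mathbb{E} g_i > 1+\epsilon$ to extract two indices $i_0,j_0$ with $\mathbb{E} g_{i_0} + \mathbb{E} g_{j_0} > 1+\epsilon$, deduces $g_{i_0}\ast g_{j_0}(m) \gg_\epsilon N$ on a long subinterval via the elementary inequality $g_{i_0}(a)g_{j_0}(b) \geq g_{i_0}(a) + g_{j_0}(b) - 1$, and then convolves iteratively with the remaining $g_\ell$ using $\mathbb{E} g_\ell > \epsilon/2$; alternatively one thresholds each $g_i$ to a set $A_i = \{n : g_i(n) > \epsilon/4\}$ and runs the combinatorics on sets. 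Either way, the heart of the lemma is a carefully engineered interval-propagation argument, not a direct pigeonhole on the simplex, and your sketch would need to be replaced by one of these.
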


However, by Propositions \ref{pro 4.9}-\ref{pro 4.10}, Proposition \ref{pro 5.1} and Propositions \ref{pro 6.3}-\ref{pro 6.4}, we discover that this transference lemma is also valid when we use it to investigate the density version of Waring-Goldbach problem. Recall the definition of $ n_{0} $ at the beginning of Section 3. Our aim is to prove $ n_{0} \in sA^{(k)}  $ (Theorem \ref{thmcont:1.1}) or $ n_{0} \in s\mathcal{P}^{(k)}  $ (Theorem \ref{thmcont:1.2}).

$ \mathit{Proof \ of \ Theorem \ 1.1 \ and \ Theorem \ 1.2 } $. The proof of Theorem \ref{thmcont:1.1} and Theorem \ref{thmcont:1.2} is omitted, since it follows directly by
repeating the arguments in \cite[proof of Theorem 1.1 in subsection 4.3]{Sal}.
  \qquad  \qquad \qquad  \  \   $ \qedsymbol $

\section*{Acknowledgements}
The author would like to thank Professor Yonghui Wang for constant encouragement and for many valuable guidance, and thank Wenying Chen for helpful discussions.

\bibliographystyle{amsplain}

\end{document}